\newtheorem{remark}{Remark}[section]
\newtheorem{lemma}[remark]{Lemma}
\newtheorem{theorem}[remark]{Theorem}
\newtheorem{proposition}[remark]{Proposition}
\newtheorem{observation}[remark]{Observation}
\title{Relating the outer-independent total Roman domination number with some classical parameters of graphs}
\author{Abel Cabrera Mart\'inez$^{(1)}$, Dorota Kuziak $^{(2)}$ and Ismael G. Yero$^{(3)}$\\
\\
$^{(1)}$ {\small Universitat Rovira i Virgili,}
{\small Departament d'Enginyeria Inform\`atica i Matem\`atiques } \\  {\small Av. Pa\"{\i}sos
Catalans 26, 43007 Tarragona, Spain.} \\{\small
  abel.cabrera\@@urv.cat}\\
$^{(2)}${\small Departamento de Estad\'istica e Investigaci\'on Operativa, Escuela T\'ecnica Superior de Ingenier\'ia de Algeciras}\\
{\small Universidad de C\'adiz,} {\small
Av. Ram\'on Puyol s/n, 11202 Algeciras, Spain.} \\ {\small
 dorota.kuziak\@@uca.es}\\
$^{(3)}${\small Departamento de Matem\'aticas, Escuela T\'ecnica Superior de Ingenier\'ia de Algeciras}\\
{\small Universidad de C\'adiz,} {\small
Av. Ram\'on Puyol s/n, 11202 Algeciras, Spain.} \\ {\small
ismael.gonzalez\@@uca.es}
}
\date{ }
\begin{document}
\maketitle

\begin{abstract}
For a given graph $G$ without isolated vertex we consider a function $f: V(G) \rightarrow \{0,1,2\}$. For every $i\in \{0,1,2\}$, let $V_i=\{v\in V(G):\; f(v)=i\}$. The function $f$ is known to be an outer-independent total Roman dominating function for the graph $G$ if it is satisfied that; (i) every vertex \textcolor[rgb]{0.00,0.00,0.00}{in $V_0$} is adjacent to at least one vertex in $V_2$; (ii) $V_0$ is an independent set; and (iii) the subgraph induced by $V_1\cup V_2$ has no isolated vertex. The minimum possible weight $\omega(f)=\sum_{v\in V(G)}f(v)$ among all outer-independent total Roman dominating functions for $G$ is called the outer-independent total Roman domination number of $G$. In this article we obtain new tight bounds for this parameter that improve some well-known results. Such bounds can also be seen as relationships between this parameter and several other classical parameters in graph theory like the domination, total domination, Roman domination, independence, and vertex cover numbers. In addition, we compute the outer-independent total Roman domination number of Sierpi\'nski graphs, circulant graphs, and the Cartesian and direct products of complete graphs.
\end{abstract}

{\it Keywords}: outer-independent total Roman domination; vertex cover; domination; (total) Roman domination; independence.

{\it AMS Subject Classification Numbers:} 05C69.

\section{Introduction and preliminaries} \label{Intro}

This work mainly deals with showing some existent interconnections between several classical graph theory topics like domination, independence, covers, and Roman domination. These topics have attracted the attention of several researches in the last few decades, and a high number of significant contributions are nowadays well known. Each new topic or parameter that is described naturally gives more insight into the classical ones, and it is usually welcome by the research community. A quick search in databases like MathSciNet or similar ones will show a large number of works, both theoretical and applied, and a wide number of researchers that have dealt with them, and are indeed still making so. \textcolor[rgb]{0.00,0.00,0.00}{Our goal is to making} some new remarkable contributions to these topics through mixing some of them, or relating or bounding some ones with the others.

We begin by stating the main basic terminology which shall be used in the whole exposition. We first consider a non directed graph $G$ without loops or multiple edges. For a vertex $x\in V(G)$, by $N_G(x)$ we mean the \emph{open neighborhood} of $x$, \emph{i.e.}, the set of vertices of $G$ adjacent to $x$. The \emph{closed neighborhood} of $x$ is then $N_G[x]=N_G(x)\cup \{x\}$. \textcolor[rgb]{0.00,0.00,0.00}{If the graph $G$ is clear from the context, then we remove the subindexes in the notations above.} The \emph{minimum} and \emph{maximum degrees} of $G$ are $\delta(G)$ and $\Delta(G)$, respectively (or $\delta$ and $\Delta$ if $G$ is clear from the context). A vertex of degree one in $G$ is a \emph{leaf}, and a vertex adjacent to a leaf is a \emph{support vertex}. The set of leaves and support vertices of $G$ are denoted by $L(G)$ and $S(G)$, respectively.

A set of vertices is an \emph{independent set} of $G$, if it induces a subgraph without edges. The maximum possible cardinality of an independent set of $G$ is the \emph{independence number} of $G$, and is denoted by $\beta(G)$. In some kind of ``oposed'' side of an independent set, we find a \emph{vertex cover set}, which is a set of vertices of $G$ such that every edge of $G$ is incident to at least one vertex of such set. The minimum possible cardinality of a vertex cover set is the \emph{vertex cover number} of $G$, denoted by $\alpha(G)$. It is well-known that for any graph $G$ of order $n$ it follows that $\alpha(G)+\beta(G)=n$ (Gallai's theorem).

A set of vertices $D\subset V(G)$ is a \emph{dominating set} of $G$, if  every vertex $v\in V(G)\setminus D$ satisfies that $N(v)\cap D\ne\emptyset$. The \textcolor[rgb]{0.00,0.00,0.00}{minimum possible} cardinality among all dominating sets of $G$ is the \emph{domination number} of $G$, which is denoted by $\gamma(G)$. Similarly, the set $D$ is a \emph{total dominating set} of $G$, if every vertex $v\in V(G)$ satisfies that $N(v)\cap D\ne\emptyset$. The \emph{total domination number}, $\gamma_t(G)$, of $G$ is analogously defined. Studies on domination and total domination in graphs \textcolor[rgb]{0.00,0.00,0.00}{are two of the most} commonly found in the literature. The books \cite{book-dom-1,book-dom-2,book-total-dom} represent fairly complete compendiums of results in these topics (although the first two of them are not much updated by now). A variant of domination (and indeed of total domination also) is that one mixing total domination properties with vertex independence. That is, a total dominating set $D$ is called an \emph{outer-independent total dominating} set of $G$, if $V(G)\setminus D$ is an independent set. The \emph{outer-independent total domination number} of $G$ is also analogously to the domination number defined, and is denoted by $\gamma_{t,oi}(G)$. This parameter was introduced and barely studied in \cite{Soner2012}, and further well studied in \cite{paper-TOID-products,Cabrera2017a}.

An important variant of domination in \textcolor[rgb]{0.00,0.00,0.00}{graphs} is that of Roman domination, which saw its formal birthday in \cite{Cockayne2004}, due to some kind of historical reasons arising from the ancient Roman Empire, that were described in the works \cite{Revelle2000,Stewart1999}. We consider a function $f:V(G)\rightarrow \{0,1,2\}$. For a set \textcolor[rgb]{0.00,0.00,0.00}{$S\subseteq V(G)$}, the \emph{weight} of $S$ under $f$ is $f(S)=\sum_{v\in S}f(v)$. If $S=V(G)$, then the weight of $S$ is indeed called the weight of $f$, and denoted by $\omega(f)$, \emph{i.e.}, $\omega(f)=f(V(G))=\sum_{v\in V(G)}f(v)$. Clearly, any such function determines three sets of vertices that we denote by $V_0,V_1,V_2$. That is, $V_i=\{v\in V(G)\,:\,f(v)=i\}$. Since there is a one to one relation between $f$ and the sets it determines, from now on we shall write $f(V_0,V_1,V_2)$ to better refer to our function $f$. With these concepts in mind, a function $f(V_0,V_1,V_2)$ is called a \emph{Roman dominating function} on $G$, if every $v\in V_0$ has a neighbor $u\in V_2$. The \emph{Roman domination number} of $G$ is then the minimum possible weight among all Roman dominating functions on $G$, and is denoted by $\gamma_R(G)$. To see the relationship between this concept and the historical situation of the Roman Empire, we suggest the seminal article \cite{Cockayne2004}, although there are nowadays several interesting references on this topic that also explain and improve this relationship by making some specifications.

One of the attempts on improving the ideas of Roman domination in graphs was first presented in \cite{LiCh13} through some more general settings, and after formally, specifically and better studied in \cite{total-Roman-1,paper-TRDF-2019}. The main idea of such improvement comes with the addition of a total domination property. That is, a Roman dominating function $f(V_0,V_1,V_2)$ is called a \emph{total Roman dominating function} on $G$, if the subgraph induced by $V_1\cup V_2$ has no isolated vertices, \emph{i.e.}, $V_1\cup V_2$ is a total dominating set of $G$. The \emph{total Roman domination number} of $G$ is analogously defined, and denoted by $\gamma_{tR}(G)$.

Another improvements of the Roman domination concept are that ones connecting them with independent sets, and thereby, with vertex cover sets. A (total) Roman dominating function $f(V_0,V_1,V_2)$ is called an \emph{outer-independent $($total$)$ Roman dominating function} (OIRDF and OITRDF for short) if $V_0$ is an independent set of $G$. Notice that, this is equivalent to say that $V_1\cup V_2$ is a vertex cover set of $G$. In connection with this last fact, it would be even more natural to call such function a covering (total) Roman dominating function instead of outer-independent (total) Roman dominating function. However, in order to keep the already stated terminology, we prefer to use that on OIRD and OITRD functions. The \emph{outer-independent $($total$)$ Roman domination number} of $G$ is the minimum possible weight among all outer-independent (total) Roman dominating functions on $G$, and is denoted by ($\gamma_{oitR}(G)$) $\gamma_{oiR}(G)$. The parameter $\gamma_{oiR}(G)$ was introduced in \cite{Abdollahzadeh2017a}, while $\gamma_{oitR}(G)$ was first presented in \cite{Cabrera2019}.

Once defined all the concepts above, we are prepared to begin with our exposition. Our main objective is then to present several relationships between all the parameters mentioned above, by making emphasis on $\gamma_{oitR}(G)$, which is the center of our investigation. We must remark that this parameter has also been recently studied in \cite{Li2018,Mojdeh2019}. For instance, in \cite{Li2018}, some computational and approximation results on this parameter were presented, and in \cite{Mojdeh2019}, some Nordhauss-Gaddum results for it were proved. In our work, we bound $\gamma_{oitR}(G)$ in terms of the other above mentioned parameters, give several chains of inequalities involving many of these parameters, and finally, we present exact values of it for a number of remarkable families of graphs $G$ which have been recently attracting the attention of several researchers. From now on, for a parameter $p(G)$ of a graph $G$, by a $p(G)$-\emph{set}, or a $p(G)$-\emph{function}, we mean a set of cardinality $p(G)$ or a function of weight $p(G)$, respectively.

\subsection{Some primary and basic results}

In order to be used as examples in several places, for showing the tightness or not of several bounds and relationships, we give some preliminary results in this subsection concerning a few basic families of graphs. Some of them are classical ones in studies of domination in graphs, although in such cases, the computations are straightforward to see, and thus left to the reader.

\begin{remark}
\label{rem:bipartite}
For any complete bipartite graph $K_{r,s}$, with $3\le r\le s$, the following observations hold.
\begin{itemize}
  \item $\gamma_{R}(K_{r,s})=\gamma_{tR}(K_{r,s})=4$.
  \item $\gamma_{oiR}(K_{r,s})=\gamma_{t,oi}(K_{r,s})=r+1$.
   \item $\gamma_{oitR}(K_{r,s})=r+2$.
\end{itemize}
\end{remark}

The \emph{wheel graph} $W_n$ is a graph of order $n$ formed by connecting a single universal vertex to all vertices of a cycle of order $n-1$.

\begin{remark}
\label{rem:wheel}
For any wheel graph $W_n$, with $n\ge 4$, the following observations hold.
\begin{itemize}
  \item $\alpha(W_n)=\left\lceil\frac{n-1}{2}\right\rceil+1$.
  \item $\gamma_{oiR}(W_n)=\left\lceil\frac{n-1}{2}\right\rceil+2$.
  \item $\gamma_{t,oi}(W_n)=\left\lceil\frac{n-1}{2}\right\rceil+1$.
  \item $\gamma_{oitR}(W_n)=\left\lceil\frac{n-1}{2}\right\rceil+2$.
\end{itemize}
\end{remark}

The family $\mathcal{F}_{p,q}$ of graphs, which we next construct, shall also be useful for our purposes. We begin with $p$ star graphs $S_{1,t_1}, \ldots , S_{1,t_p}$, with centers $c_1,\dots,c_p$, respectively, such that $t_1,\dots,t_p\ge 3$; and $q$ complete bipartite graphs $K_{r_1,r'_1}$, $\dots$, $K_{r_q,r'_q}$ with $4\le r_i\le r'_i/2$ for every $i\in\{1,\dots,q\}$. Next, for every $i\in\{1,\dots,q\}$, we add $r_i$ pendant vertices to exactly two vertices, say $x_i,y_i$, of each complete bipartite graph $K_{r_i,r'_i}$ belonging to the bipartition set of cardinality $r_i$. Finally, to obtain a graph $G\in \mathcal{F}_{p,q}$, we add an extra vertex $w$, and join $w$ with an edge to exactly one leaf, say $z_i$, of each star $S_{1,t_i}$, and to exactly one vertex, say $w_i$, of the bipartition set of cardinality $r'_i$ of each complete bipartite graph $K_{r_i,r'_i}$. Figure \ref{F-2-2} shows a fairly representative example of a graph in $\mathcal{F}_{2,2}$.

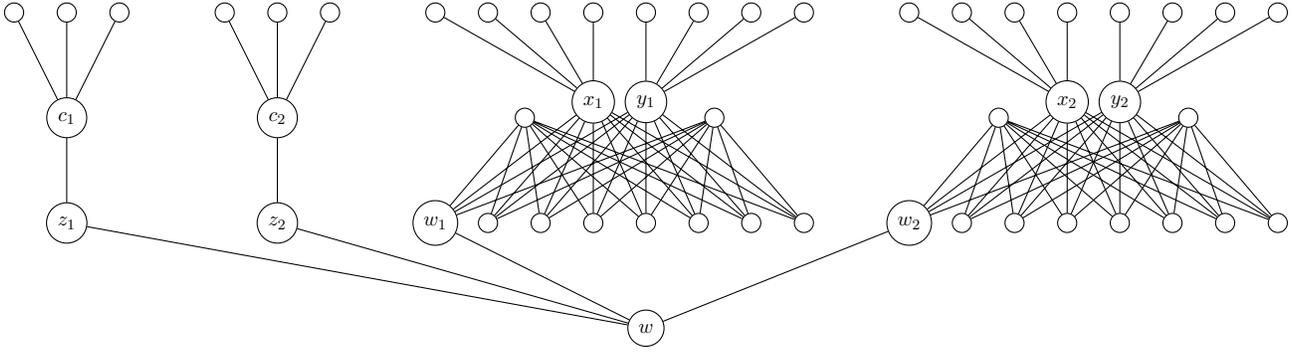
\begin{figure}[h]
\centering
\begin{tikzpicture}[scale=.7, transform shape]
\node [draw, shape=circle] (a1) at  (1,0) {$z_1$};
\node [draw, shape=circle] (a2) at  (1,4) {};
\node [draw, shape=circle] (a3) at  (2,4) {};
\node [draw, shape=circle] (a4) at  (0,4) {};

\node [draw, shape=circle] (b1) at  (5,0) {$z_2$};
\node [draw, shape=circle] (b2) at  (5,4) {};
\node [draw, shape=circle] (b3) at  (6,4) {};
\node [draw, shape=circle] (b4) at  (4,4) {};

\node [draw, shape=circle] (c1) at  (8,0) {$w_1$};
\node [draw, shape=circle] (c2) at  (9,0) {};
\node [draw, shape=circle] (c3) at  (10,0) {};
\node [draw, shape=circle] (c4) at  (11,0) {};
\node [draw, shape=circle] (c5) at  (12,0) {};
\node [draw, shape=circle] (c6) at  (13,0) {};
\node [draw, shape=circle] (c7) at  (14,0) {};
\node [draw, shape=circle] (c8) at  (15,0) {};

\node [draw, shape=circle] (d1) at  (17,0) {$w_2$};
\node [draw, shape=circle] (d2) at  (18,0) {};
\node [draw, shape=circle] (d3) at  (19,0) {};
\node [draw, shape=circle] (d4) at  (20,0) {};
\node [draw, shape=circle] (d5) at  (21,0) {};
\node [draw, shape=circle] (d6) at  (22,0) {};
\node [draw, shape=circle] (d7) at  (23,0) {};
\node [draw, shape=circle] (d8) at  (24,0) {};

\node [draw, shape=circle] (a22) at  (1,2) {$c_1$};
\node [draw, shape=circle] (b22) at  (5,2) {$c_2$};

\node [draw, shape=circle] (c33) at  (9.7,2) {};
\node [draw, shape=circle] (c44) at  (11,2.3) {$x_1$};
\node [draw, shape=circle] (c55) at  (12,2.3) {$y_1$};
\node [draw, shape=circle] (c66) at  (13.3,2) {};
\node [draw, shape=circle] (d33) at  (18.7,2) {};
\node [draw, shape=circle] (d44) at  (20,2.3) {$x_2$};
\node [draw, shape=circle] (d55) at  (21,2.3) {$y_2$};
\node [draw, shape=circle] (d66) at  (22.3,2) {};

\node [draw, shape=circle] (c111) at  (8,4) {};
\node [draw, shape=circle] (c222) at  (9,4) {};
\node [draw, shape=circle] (c333) at  (10,4) {};
\node [draw, shape=circle] (c444) at  (11,4) {};
\node [draw, shape=circle] (c555) at  (12,4) {};
\node [draw, shape=circle] (c666) at  (13,4) {};
\node [draw, shape=circle] (c777) at  (14,4) {};
\node [draw, shape=circle] (c888) at  (15,4) {};
\node [draw, shape=circle] (d111) at  (17,4) {};
\node [draw, shape=circle] (d222) at  (18,4) {};
\node [draw, shape=circle] (d333) at  (19,4) {};
\node [draw, shape=circle] (d444) at  (20,4) {};
\node [draw, shape=circle] (d555) at  (21,4) {};
\node [draw, shape=circle] (d666) at  (22,4) {};
\node [draw, shape=circle] (d777) at  (23,4) {};
\node [draw, shape=circle] (d888) at  (24,4) {};

\node [draw, shape=circle] (w) at  (12,-2) {$w$};

\draw(a1)--(a22)--(a2);
\draw(a22)--(a3);
\draw(a22)--(a4);

\draw(b1)--(b22)--(b2);
\draw(b22)--(b3);
\draw(b22)--(b4);

\draw(c1)--(c33)--(c2)--(c44)--(c3)--(c55)--(c4)--(c66)--(c5)--(c55)--(c6)--(c66)--(c7)--(c55)--(c8)--(c66)--(c1)--(c44)--(c4)--(c33);
\draw(c33)--(c3)--(c66)--(c2)--(c55)--(c1);
\draw(c5)--(c33)--(c6)--(c44)--(c7)--(c33)--(c8)--(c44)--(c5);

\draw(c111)--(c44)--(c222);
\draw(c333)--(c44)--(c444);
\draw(c666)--(c55)--(c777);
\draw(c555)--(c55)--(c888);

\draw(d1)--(d33)--(d2)--(d44)--(d3)--(d55)--(d4)--(d66)--(d5)--(d55)--(d6)--(d66)--(d7)--(d55)--(d8)--(d66)--(d1)--(d44)--(d4)--(d33);
\draw(d33)--(d3)--(d66)--(d2)--(d55)--(d1);
\draw(d5)--(d33)--(d6)--(d44)--(d7)--(d33)--(d8)--(d44)--(d5);

\draw(d111)--(d44)--(d222);
\draw(d333)--(d44)--(d444);
\draw(d666)--(d55)--(d777);
\draw(d555)--(d55)--(d888);

\draw(a1)--(w)--(b1);
\draw(c1)--(w)--(d1);

\end{tikzpicture}
\caption{A graph of the family $\mathcal{F}_{2,2}$.}\label{F-2-2}
\end{figure}

The following remark gives the values of some domination parameters of a graph in $\mathcal{F}_{p,q}$. Some of these values can be straightforwardly computed, and thus left to the reader's discretion.

\begin{remark}
\label{rem:F-p-q}
For any graph $G\in \mathcal{F}_{p,q}$ the following claims hold.
\begin{itemize}
  \item[{\rm (a)}] $\gamma(G)=p+3q$.
  \item[{\rm (b)}] $\gamma_t(G)=2p+3q$.
  \item[{\rm (c)}] $\gamma_R(G)=2p+6q$.
  \item[{\rm (d)}] $\gamma_{tR}(G)=3p+6q$.
  \item[{\rm (e)}] $\gamma_{t,oi}(G)=2p+q+\sum_{i=1}^{q}r_i$.
  \item[{\rm (f)}] $\gamma_{oiR}(G)=2p+2q+1+\sum_{i=1}^{q}r_i$.
  \item[{\rm (g)}] $\gamma_{\textcolor[rgb]{0.00,0.00,0.00}{oitR}}(G)=3p+3q+\sum_{i=1}^{q}r_i$.
\end{itemize}
\end{remark}

\begin{proof}
(a) and (b) can be easily observed. For (c), we note that the function $f_3$ defined as follows,
$$f_3(u)=\left\{\begin{array}{ll}
                  2, & \mbox{if $u\in \{c_1,\dots,c_p\}\cup \{w_1,\dots,w_q\}\cup \{x_i,y_i\,:\,1\le i\le q\}$}, \\
                  0, & \mbox{otherwise},
                \end{array}
\right.$$
is a $\gamma_R(G)$-function with weight $2p+6q$. To observe (d), we use the function $f_4$ defined by
$$f_4(u)=\left\{\begin{array}{ll}
                  2, & \mbox{if $u\in \{c_1,\dots,c_p\}\cup \{w_1,\dots,w_q\}\cup \{x_i,y_i\,:\,1\le i\le q\}$}, \\
                  1, & \mbox{if $u\in \{z_1,\dots,z_p\}$},\\
                  0, & \mbox{otherwise},
                \end{array}
\right.$$
to get a $\gamma_{tR}(G)$-function with weight $3p+6q$. For item (e), we first note that the set $S=\{c_1,\dots,c_p\}\cup \{z_1,\dots,z_p\}\cup \{w_1,\dots,w_q\}$, together with the bipartition set of cardinality $r_i$ of each complete bipartite graph $K_{r_i,r'_i}$, forms a $\gamma_{t,oi}(G)$-set of cardinality $2p+q+\sum_{i=1}^{q}r_i$. Now, for (f), we observe that the function $f_6$ given as,
$$f_6(u)=\left\{\begin{array}{ll}
                  2, & \mbox{if $u\in \{c_1,\dots,c_p\}\cup \{x_i,y_i\,:\,1\le i\le q\}$}, \\
                  1, & \mbox{if $u=w$ or if $u\in W$},\\
                  0, & \mbox{otherwise},
                \end{array}
\right.$$
where $W$ is the union of all bipartition sets of cardinality $r_i$ of each complete bipartite graph $K_{r_i,r'_i}$ minus the vertices $x_i,y_i$,
is a $\gamma_{oiR}(G)$-function of weight $2p+2q+1+\sum_{i=1}^{q}r_i$. Finally, for item (g), we consider the function $f_7$ defined as
$$f_7(u)=\left\{\begin{array}{ll}
                  2, & \mbox{if $u\in \{c_1,\dots,c_p\}\cup \{x_i,y_i\,:\,1\le i\le q\}$}, \\
                  1, & \mbox{if $u\in \{z_1,\dots,z_p\}\cup \{w_1,\dots,w_q\}\cup W$},\\
                  0, & \mbox{otherwise},
                \end{array}
\right.$$
with $W$ as defined above. With some not so hard arguments, we note that such function $f_7$ is a $\gamma_{oitR}(G)$-function of weight $3p+3q+\sum_{i=1}^{q}r_i$.
\end{proof}

\section{Bounds and relationships with other parameters}

Cabrera Mart\'inez, Kuziak and Yero \cite{Cabrera2019} in 2019, established the following result for any connected nontrivial graph, although it also holds for any graph with no isolated vertex.

\begin{theorem}\label{teo-oitR-VC-Cabrera2019}{\rm \cite{Cabrera2019}}
For any graph $G$ with no isolated vertex,
$$\alpha(G)+1\leq \gamma_{oitR}(G)\leq 3\alpha(G).$$
\end{theorem}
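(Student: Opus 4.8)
The plan is to exploit the observation, already noted in the excerpt, that $V_0$ being independent is equivalent to $V_1\cup V_2$ being a vertex cover of $G$; hence for \emph{every} outer-independent total Roman dominating function $f(V_0,V_1,V_2)$ we automatically have $|V_1|+|V_2|\ge \alpha(G)$. Since the weight satisfies $\omega(f)=|V_1|+2|V_2|=(|V_1|+|V_2|)+|V_2|$, this single inequality will drive both bounds.

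For the lower bound, let $f(V_0,V_1,V_2)$ be a $\gamma_{oitR}(G)$-function. I would split into two cases according to whether $V_2$ is empty. If $V_2\ne\emptyset$, then $\omega(f)=(|V_1|+|V_2|)+|V_2|\ge \alpha(G)+1$ directly. If $V_2=\emptyset$, then condition (i) forces $V_0=\emptyset$ as well (a vertex in $V_0$ would need a neighbour in $V_2$), so $V_1=V(G)$ and $\omega(f)=n$; since any graph has an independent set of size at least one we have $\beta(G)\ge 1$, whence $\alpha(G)=n-\beta(G)\le n-1$ and again $\omega(f)=n\ge\alpha(G)+1$. Combining the cases gives $\gamma_{oitR}(G)\ge\alpha(G)+1$.

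For the upper bound, I would construct an explicit function from a minimum vertex cover $C$ (so $|C|=\alpha(G)$). Set $f(v)=2$ for $v\in C$ and $f(v)=0$ otherwise. Because $C$ is a vertex cover, $V(G)\setminus C$ is independent, so condition (ii) holds; and since $G$ has no isolated vertex, each $v\in V(G)\setminus C$ has all its neighbours in $C=V_2$, so condition (i) holds. The only way this fails to be an OITRDF is condition (iii): a vertex $c\in C$ may be isolated in the subgraph induced by $C$. For each such $c$ (all of whose neighbours lie outside $C$), I would promote one fixed neighbour $u_c\notin C$ to $f(u_c)=1$; this keeps $V_0\subseteq V(G)\setminus C$ independent while making both $c$ and $u_c$ non-isolated in the subgraph induced by $V_1\cup V_2$. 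The number of promoted vertices is at most the number of vertices of $C$ isolated in the subgraph induced by $C$, which is at most $|C|=\alpha(G)$, so the total weight is at most $2\alpha(G)+\alpha(G)=3\alpha(G)$.

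The routine parts are the verifications of conditions (i)--(iii) for the constructed function. The one genuinely delicate point is the degenerate case $V_2=\emptyset$ in the lower bound: here one must notice that condition (i) collapses $V_0$ to the empty set and then invoke $\beta(G)\ge1$ (equivalently $\alpha(G)\le n-1$) to close the argument; overlooking this case would leave the lower bound unproven, since the general estimate $\omega(f)\ge\alpha(G)+|V_2|$ is vacuous when $|V_2|=0$.
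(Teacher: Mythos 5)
Your proposal is correct, but note that the paper itself never proves this statement: Theorem~\ref{teo-oitR-VC-Cabrera2019} is imported from \cite{Cabrera2019} as a black box, so there is no in-paper proof to compare against; your argument is a valid, self-contained substitute. Both halves check out. For the lower bound, the identity $\omega(f)=(|V_1|+|V_2|)+|V_2|$ together with the observation that $V_1\cup V_2$ is a vertex cover gives $\omega(f)\ge\alpha(G)+|V_2|$, and your case split on $V_2=\emptyset$ is genuinely necessary, not pedantic: for $K_2$ the unique optimal OITRDF assigns $1$ to both vertices, so $V_2=\emptyset$ does occur at optimality, and your fallback $\omega(f)=n\ge\alpha(G)+1$ (via $\beta(G)\ge 1$) is exactly what closes that case. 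For the upper bound, your construction (label a minimum vertex cover $C$ with $2$'s, then promote one outside neighbour of each vertex of $C$ that is isolated in the subgraph induced by $C$) verifies conditions (i)--(iii) as claimed, since promoted vertices keep all their neighbours inside $C=V_2$ and each promotion repairs exactly the isolation it was introduced for. It is worth observing how your argument relates to what the paper does prove: the lower-bound mechanism you use is the degenerate form of the paper's Theorem~\ref{teo-principal}, whose proof likewise rests on $V_1\cup V_2$ (essentially) being a vertex cover and then sharpens the additive term $|V_2|\ge 1$ to $\max\{|S(G)|,\lceil(n-\alpha(G))/(\Delta-1)\rceil\}$; and your cover-based upper-bound construction is cruder than the paper's pairing of a minimum vertex cover with a minimum total dominating set (which yields $\alpha(G)+\gamma_t(G)\le 3\alpha(G)$ by Observation~\ref{obs-teo-principal}), but it is more elementary and entirely sufficient for the $3\alpha(G)$ bound you were asked to prove.
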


In addition, they characterized \textcolor[rgb]{0.00,0.00,0.00}{the families of connected graphs} $G$  that satisfy the equalities $\gamma_{oitR}(G)=3\alpha(G)$ and $\gamma_{oitR}(G)=3\alpha(G)-1$.

\vspace{.2cm}

In order to improve these bounds above, we need to introduce the next results. Also, we recall that a graph is \emph{claw-free} if it does not contain $K_{1,3}$ as an induced subgraph.

\begin{observation}\label{obs-teo-principal}
For any graph $G$ with no isolated vertex, order $n$ and maximum degree $\Delta\geq 2$,
\begin{enumerate}
\item[{\rm (i)}]$1\leq \left\lceil\frac{n-\alpha(G)}{\Delta-1}\right\rceil$.
\item[{\rm (ii)}] $\gamma_t(G)\leq \gamma_R(G)\leq 2\gamma(G)\leq 2\alpha(G)$ {\rm(from \cite{HRSW}, \cite{Cockayne2004} and \cite{book-dom-1})}.
\end{enumerate}
\end{observation}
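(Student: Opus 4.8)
The plan is to treat the two items separately, since (i) is an elementary consequence of Gallai's theorem while (ii) is simply a concatenation of the three cited inequalities.

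For (i), I would first recall that for a real number $x$ one has $\lceil x\rceil\geq 1$ precisely when $x>0$, so it suffices to prove that the fraction $\frac{n-\alpha(G)}{\Delta-1}$ is strictly positive. The denominator is immediate: the hypothesis $\Delta\geq 2$ gives $\Delta-1\geq 1>0$. For the numerator I would invoke Gallai's theorem $\alpha(G)+\beta(G)=n$, stated in the preliminaries, to rewrite $n-\alpha(G)=\beta(G)$. Since $G$ has no isolated vertex it has at least one vertex, and any single vertex is an independent set, so $\beta(G)\geq 1$; equivalently, removing any one vertex from $V(G)$ leaves a vertex cover, whence $\alpha(G)\leq n-1$. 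Either way $n-\alpha(G)\geq 1$, so $\frac{n-\alpha(G)}{\Delta-1}\geq\frac{1}{\Delta-1}>0$ and the ceiling is at least $1$.

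For (ii), I would assemble the chain from right to left, which is the natural direction of the supporting arguments. The rightmost step $2\gamma(G)\leq 2\alpha(G)$ reduces to $\gamma(G)\leq\alpha(G)$, the standard fact from \cite{book-dom-1} that in a graph without isolated vertex every vertex cover is a dominating set: if $C$ is a vertex cover and $v\notin C$, then $v$ has a neighbor and each edge incident to $v$ must be covered by its other endpoint, so that endpoint lies in $C$ and $C$ dominates $v$. The middle step $\gamma_R(G)\leq 2\gamma(G)$ is the classical Roman bound of \cite{Cockayne2004}, realized by placing the value $2$ on a $\gamma(G)$-set and $0$ elsewhere. The leftmost step $\gamma_t(G)\leq\gamma_R(G)$ is quoted directly from \cite{HRSW}. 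Concatenating the three yields the stated chain.

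As for obstacles, there is essentially none of substance: the content is elementary and item (ii) is deliberately packaged from the literature. The only point demanding a little care is the direction of the ceiling argument in (i) — one must establish \emph{strict} positivity of the fraction (via $\beta(G)\geq 1$) rather than mere non-negativity, since $\lceil 0\rceil=0$ would already violate the claimed bound.
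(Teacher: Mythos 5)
Your proposal is correct: the paper offers no proof of this Observation at all, simply citing \cite{HRSW}, \cite{Cockayne2004} and \cite{book-dom-1} for item (ii) and leaving (i) implicit, and your argument fills in exactly those pieces in the intended way (Gallai's theorem plus $\alpha(G)\leq n-1$ for the strict positivity in (i), and the three standard quoted inequalities for (ii)). Nothing is missing; your care about strict positivity of $\frac{n-\alpha(G)}{\Delta-1}$ is precisely the right point to check.
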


\begin{lemma}\label{lem:cover-total-dom-claw}
Let $G$ be a claw-free graph of minimum degree $\delta\geq 3$. If $S$ is a vertex cover of $G$, then $S$ is also a total dominating set of $G$.
\end{lemma}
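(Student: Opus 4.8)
The plan is to verify directly that $S$ is a total dominating set, i.e., that every vertex $v\in V(G)$ has at least one neighbor in $S$. The only structural facts I expect to need are: (a) since $S$ is a vertex cover, the complementary set $V(G)\setminus S$ is independent; (b) every vertex has degree at least $3$; and (c) $G$ contains no induced $K_{1,3}$. I would split the argument into two cases according to whether $v$ lies outside or inside $S$.

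First, suppose $v\in V(G)\setminus S$. Because $V(G)\setminus S$ is independent, $v$ has no neighbor in $V(G)\setminus S$, so every neighbor of $v$ lies in $S$. As $\delta\geq 3$, the vertex $v$ has at least one neighbor, whence $N(v)\cap S\neq\emptyset$. This case is immediate and uses neither claw-freeness nor the full strength of $\delta\geq 3$.

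The substance of the lemma lies in the case $v\in S$, and here the claw-free condition enters. I would argue by contradiction: assume $v$ has no neighbor in $S$. Then all neighbors of $v$ belong to $V(G)\setminus S$, and since $\delta\geq 3$ I may select three distinct such neighbors $u_1,u_2,u_3$. As $V(G)\setminus S$ is independent, the vertices $u_1,u_2,u_3$ are pairwise nonadjacent, so the subgraph induced by $\{v,u_1,u_2,u_3\}$ is precisely $K_{1,3}$, contradicting that $G$ is claw-free. Hence $v$ must have a neighbor in $S$, and together with the first case this shows that $S$ is a total dominating set.

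The step I expect to be the crux is this second case: the key observation is that a vertex of $S$ with no neighbor in $S$ would be forced to serve as the center of an induced claw. The hypothesis $\delta\geq 3$ is used exactly to produce the three independent neighbors that form the legs of the claw, while claw-freeness forbids precisely this configuration; weakening either hypothesis would break the argument, which suggests both assumptions are essential.
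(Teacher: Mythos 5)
Your proof is correct and follows essentially the same route as the paper's: the paper also first notes that a vertex cover is a dominating set (your first case) and then shows that an isolated vertex of the subgraph induced by $S$ would, via its three independent neighbors outside $S$, be the center of an induced $K_{1,3}$ (your second case). The decomposition into "dominating" plus "no isolated vertex in the induced subgraph on $S$" is exactly the paper's argument, just phrased as two cases on the vertex $v$.
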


\begin{proof}
Let $S$ be a vertex cover of $G$. Hence, $S$ is also a dominating set. If the subgraph induced by $S$ has an  isolated vertex $v$, then since $G$ has minimum degree three, the vertex $v$ has at least three neighbors not in $S$. Since $V(G)\setminus S$ is an independent set, we have that $v$ together with these three neighbors induce a $K_{1,3}$, which is not possible. Therefore, the subgraph induced by $S$ has no isolated vertices or equivalently, $S$ is a total dominating set of $G$.
\end{proof}

With the results above in mind, we state the following theorem, which improves the bounds given in Theorem \ref{teo-oitR-VC-Cabrera2019}.

\begin{theorem}\label{teo-principal}
For any graph $G$ with no isolated vertex, order $n$ and maximum degree $\Delta\geq 2$,
$$\alpha(G)+\max\left\{|S(G)|, \left\lceil\frac{n-\alpha(G)}{\Delta-1}\right\rceil\right\}\leq \gamma_{oitR}(G)\leq \alpha(G)+\gamma_t(G).$$
Moreover, for any claw-free graph $G$ of minimum degree $\delta\geq 3$,
$$\gamma_{oitR}(G)\leq \alpha(G)+\gamma(G).$$
\end{theorem}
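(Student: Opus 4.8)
The plan is to establish the three inequalities separately, but to reuse a single construction for both upper bounds. Throughout the lower-bound argument I would fix a $\gamma_{oitR}(G)$-function $f=(V_0,V_1,V_2)$ and exploit the identity $\omega(f)=|V_1|+2|V_2|=|V_1\cup V_2|+|V_2|$, together with the fact that $V_1\cup V_2$ is a vertex cover, so $|V_1\cup V_2|\ge\alpha(G)$. To recover the term $\lceil(n-\alpha)/(\Delta-1)\rceil$, I would observe that every vertex of $V_2$ lies in $V_1\cup V_2$ and hence, by condition (iii), has a neighbor inside $V_1\cup V_2$; consequently it has at most $\Delta-1$ neighbors in $V_0$. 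Since $V_2$ dominates $V_0$, this gives $|V_0|\le(\Delta-1)|V_2|$. Writing $k=|V_1\cup V_2|\ge\alpha$, so that $|V_0|=n-k$, I obtain $\omega(f)\ge k+(n-k)/(\Delta-1)$, an expression nondecreasing in $k$ for $\Delta\ge2$; evaluating at $k=\alpha$ and invoking integrality of $\omega(f)-\alpha$ (and Observation~\ref{obs-teo-principal}(i)) yields $\alpha+\lceil(n-\alpha)/(\Delta-1)\rceil$.

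For the term $|S(G)|$ I would first argue that no support vertex can lie in $V_0$: its leaf would then be forced into the cover $V_1\cup V_2$ yet would have its only neighbor in $V_0$, contradicting (iii). Hence $S(G)\subseteq V_1\cup V_2$. Next, for each support vertex in $V_1$ its leaf cannot be in $V_0$ (the leaf would need a $V_2$-neighbor, namely that very support, which is impossible), so the leaf also lies in the cover; deleting all such leaves preserves a vertex cover, because their only edges go to the corresponding supports, which remain. This shows $|V_1\cup V_2|\ge\alpha+|S(G)\cap V_1|$, while trivially $|V_2|\ge|S(G)\cap V_2|$. Adding these and using $|S(G)\cap V_1|+|S(G)\cap V_2|=|S(G)|$ gives $\omega(f)\ge\alpha+|S(G)|$; combining with the previous paragraph produces the claimed maximum. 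I expect this charging argument, in particular the bookkeeping for supports split between $V_1$ and $V_2$ and the leaf-deletion step, to be the main obstacle.

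For the upper bound $\gamma_{oitR}(G)\le\alpha(G)+\gamma_t(G)$, I would let $C$ be a minimum vertex cover and $D$ a $\gamma_t(G)$-set, and define $f$ by $f(v)=2$ on $C\cap D$, $f(v)=1$ on $(C\setminus D)\cup(D\setminus C)$, and $f(v)=0$ elsewhere. Its weight telescopes to $|C|+|D|=\alpha(G)+\gamma_t(G)$. The verification is short: $V_0=V(G)\setminus(C\cup D)\subseteq V(G)\setminus C$ is independent; $V_1\cup V_2=C\cup D\supseteq D$ has no isolated vertex because $D$ totally dominates; and for condition (i), any $v\in V_0$ has all its neighbors in $C$ (as $v\notin C$ and $C$ is a cover) and at least one neighbor in $D$ (as $D$ totally dominates), so that neighbor lies in $C\cap D=V_2$. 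The crux is exactly this last observation: a $D$-neighbor of an uncovered vertex is automatically in the cover.

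Finally, for claw-free $G$ with $\delta\ge3$ I would run the identical construction but take $D$ to be a minimum dominating set, giving weight $\alpha(G)+\gamma(G)$. Conditions (i) and (ii) go through verbatim, since mere domination suffices for (i); for (iii) I would invoke Lemma~\ref{lem:cover-total-dom-claw}: as $C$ is a vertex cover of a claw-free graph with $\delta\ge3$, it is a total dominating set, so every vertex of $C\cup D\supseteq C$ has a neighbor in $C\subseteq C\cup D$. This substitutes for the total-domination property of $D$ used in the previous paragraph and yields $\gamma_{oitR}(G)\le\alpha(G)+\gamma(G)$.
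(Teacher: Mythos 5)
Your proposal is correct and takes essentially the same approach as the paper: the upper bounds use the identical construction ($V_2=C\cap D$, $V_1$ the symmetric difference, weight $|C|+|D|$, with Lemma~\ref{lem:cover-total-dom-claw} supplying condition (iii) in the claw-free case), and the lower bound rests on the same two facts, namely that $V_1\cup V_2$ is a vertex cover whose $V_2$-vertices have at most $\Delta-1$ neighbors in $V_0$, and that support vertices (and the leaves of supports lying in $V_1$) are forced into $V_1\cup V_2$. The differences are cosmetic---your monotonicity-plus-integrality step replaces the paper's multiplication by $\Delta-1$, and your leaf-deletion bound $|V_1\cup V_2|\ge \alpha(G)+|S(G)\cap V_1|$ is the paper's observation that $(V_1\setminus L(G))\cup V_2$ is a vertex cover in different bookkeeping---and both arguments implicitly exclude $K_2$ components (on which the leaf-deletion step, and indeed the stated bound itself, fails), which is immaterial for connected graphs with $\Delta\ge 2$.
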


\begin{proof}
We first proceed to prove the first upper bound. Let $D$ be a $\gamma_t(G)$-set and $S$ an $\alpha(G)$-set. Let $f(V_0,V_1,V_2)$ be a function defined by $V_0=V(G)\setminus (D\cup S)$, $V_1=(D\cup S)\setminus (D\cap S)$ and $V_2= D\cap S$. We claim that $f$ is an OITRDF on $G$.

It is straightforward that $V_0=V(G)\setminus (D\cup S)$ is an independent set and $V_1\cup V_2=D\cup S$ is a total dominating set. We only need to prove that every vertex in $V_0$ has a neighbor in $V_2$. Let $x\in V_0=V(G)\setminus (D\cup S)$. Since $S$ is a vertex cover and $D$ is a total dominating set, we deduce that $N(x)\subseteq S$ and $N(x)\cap D\neq \emptyset$, respectively. Hence $N(x)\cap D\cap S\neq \emptyset$, or equivalently, $N(x)\cap V_2\neq  \emptyset$. Thus, $f$ is an OITRDF on $G$, as desired, and so, $\gamma_{oitR}(G)\leq \omega(f)\leq |(D\cup S)\setminus (D\cap S)|+2|D\cap S|=\alpha(G)+\gamma_t(G)$, as desired.

Next, we proceed to prove the lower bound. Let $f(V_0,V_1,V_2)$ be a $\gamma_{oitR}(G)$-function. Notice first that $(V_1\setminus L(G))\cup V_2$ is a vertex cover. Moreover, $S(G)\subseteq V_1\cup V_2$ and $| V_1\cap S(G)|\leq |V_1\cap L(G)|$.  Therefore,
\begin{eqnarray*}
\gamma_{oitR}(G) &= &|V_1|+2|V_2|\\
  &=&|V_1\setminus L(G)|+|V_1\cap L(G)|+2|V_2|\\
  &=&(|V_1\setminus L(G)|+|V_2|)+(|V_1\cap L(G)|+|V_2|)\\
  &\geq&(|V_1\setminus L(G)|+|V_2|)+(|V_1\cap S(G)|+|V_2|)\\
  &\geq &\alpha(G)+|S(G)|.
\end{eqnarray*}

Now, we notice that every vertex in $V_2$ has at most $\Delta-1$ neighbors in $V_0$ as $V_1\cup V_2$ is a total dominating set. Hence, $|V_0|\leq (\Delta-1)|V_2|$. Taking into account the inequality above, and the fact that \textcolor[rgb]{0.00,0.00,0.00}{$n-\alpha(G)\geq |V(G)\setminus (V_1\cup V_2)|=|V_0|$} as $V_1\cup V_2$ is a vertex cover, we have
\begin{eqnarray*}
(\Delta-1)\gamma_{oitR}(G) &= &(\Delta-1)(|V_1|+2|V_2|)\\
  &=&(\Delta-1)(|V_1|+|V_2|)+(\Delta-1)|V_2|\\
  &\geq & (\Delta-1)(n-|V_0|)+|V_0|\\
  &\geq & \textcolor[rgb]{0.00,0.00,0.00}{n(\Delta-1)} -(\Delta-2)|V_0|\\
  &\geq & \textcolor[rgb]{0.00,0.00,0.00}{n(\Delta-1)} -(\Delta-2)(n-\alpha(G))\\
  &= & (\Delta-1)\alpha(G)+ (n-\alpha(G)).
\end{eqnarray*}

This implies that $\gamma_{oitR}(G)\geq \alpha(G)+\left\lceil\frac{n-\alpha(G)}{\Delta-1}\right\rceil$, which completes the first part of the proof.

We now consider $G$ is claw-free. The next proof follows along the lines of \textcolor[rgb]{0.00,0.00,0.00}{the first part of} this proof. We assume $D$ is a $\gamma(G)$-set and $S$ is an $\alpha(G)$-set. We claim that the function $f(V_0,V_1,V_2)$ (as above) is an OITRDF on $G$. Recall that $V_0=V(G)\setminus (D\cup S)$, $V_1=(D\cup S)\setminus (D\cap S)$ and $V_2= D\cap S$.

It is clear that $V_0$ is an independent set as $V_0\subseteq V(G)\setminus S$. Since $G$ is claw-free graph, by Lemma \ref{lem:cover-total-dom-claw}, we have that $S$ is also a total dominating set. Hence, $V_1\cup V_2=D\cup S$ is a total dominating set as well. We only need to prove that every vertex in $V_0$ has a neighbor in $V_2$. Let $x\in V_0=V(G)\setminus (D\cup S)$. Since $S$ is a vertex cover and $D$ is a dominating set, we deduce that $N(x)\subseteq S$ and $N(x)\cap D\neq \emptyset$, respectively. Hence $N(x)\cap D\cap S\neq \emptyset$, \emph{i.e.}, $N(x)\cap V_2\neq \emptyset$. Therefore, $f$ is an OITRDF on $G$, as desired. Thus, $\gamma_{oitR}(G)\leq \omega(f)\leq |(D\cup S)\setminus (D\cap S)|+2|D\cap S|=\alpha(G)+\gamma(G)$, which completes the proof.
\end{proof}

The bounds above are tight. For instance, for the graph $G$ shown in Figure \ref{fig-1} we have that $\gamma_{oitR}(G)=7=\alpha(G)+\gamma_t(G)$. Also, the complete graph $K_n$ satisfies that $\gamma_{oitR}(K_n)=n=\alpha(K_n)+\left\lceil\frac{n-\alpha(K_n)}{\Delta-1}\right\rceil$. Furthermore, in \cite{Cabrera2019}, the authors showed that the corona graph $G\cong G_1\odot N_r$ satisfies that $\gamma_{oitR}(G)=2|S(G)|=\alpha(G)+|S(G)|$. In addition, for the case of wheel graphs $W_n$, since they have no leaves, they clearly have no support vertices, and so $|S(W_n)|=0$. For such graphs, the lower bound above is tight when $n\ge 7$, since $\left\lceil\frac{n-\alpha(\textcolor[rgb]{0.00,0.00,0.00}{W_n})}{\Delta(W_n)-1}\right\rceil=1$ and, by Remark \ref{rem:wheel}, $\gamma_{oitR}(W_n)=\left\lceil\frac{n-1}{2}\right\rceil+2=\alpha(W_n)+1$. Other graphs that show the tightness of the upper bound of Theorem \ref{teo-principal} are the complete bipartite graphs $K_{r,s}$, which can be seen by using Remark \ref{rem:bipartite}. For the tightness of the bound concerning claw-free graphs, we consider for instance the complete graph $K_n$ ($n\geq 4$), which is claw-free, and satisfies that $\gamma_{oitR}(K_n)=n=\alpha(K_n)+\gamma(K_n)$.

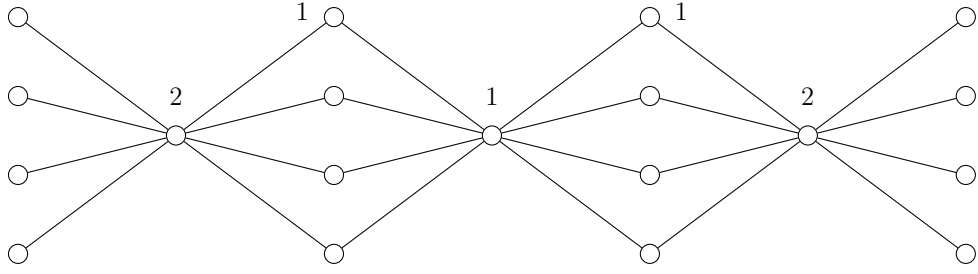
\begin{figure}[ht]
\centering
\begin{tikzpicture}[scale=.7, transform shape]
\node [draw, shape=circle] (a1) at  (0,0) {};
\node [draw, shape=circle] (a2) at  (0,1.5) {};
\node [draw, shape=circle] (a3) at  (0,3) {};
\node [draw, shape=circle] (a4) at  (0,4.5) {};

\node [draw, shape=circle] (b1) at  (6,0) {};
\node [draw, shape=circle] (b2) at  (6,1.5) {};
\node [draw, shape=circle] (b3) at  (6,3) {};
\node [draw, shape=circle] (b4) at  (6,4.5) {};

\node [draw, shape=circle] (c1) at  (12,0) {};
\node [draw, shape=circle] (c2) at  (12,1.5) {};
\node [draw, shape=circle] (c3) at  (12,3) {};
\node [draw, shape=circle] (c4) at  (12,4.5) {};

\node [draw, shape=circle] (d1) at  (18,0) {};
\node [draw, shape=circle] (d2) at  (18,1.5) {};
\node [draw, shape=circle] (d3) at  (18,3) {};
\node [draw, shape=circle] (d4) at  (18,4.5) {};

\node [draw, shape=circle] (ab) at  (3,2.25) {};
\node at (3,3) {\Large $2$};
\node [draw, shape=circle] (bc) at  (9,2.25) {};
\node at (9,3) {\Large $1$};
\node [draw, shape=circle] (cd) at  (15,2.25) {};
\node at (15,3) {\Large $2$};

\node at (5.4,4.6) {\Large $1$};
\node at (12.6,4.6) {\Large $1$};

\draw (a1)--(ab)--(b1)--(bc)--(c1)--(cd)--(d1);
\draw (a2)--(ab)--(b2)--(bc)--(c2)--(cd)--(d2);
\draw (a3)--(ab)--(b3)--(bc)--(c3)--(cd)--(d3);
\draw (a4)--(ab)--(b4)--(bc)--(c4)--(cd)--(d4);

\end{tikzpicture}
\caption{A graph $G$ with $\gamma_{oitR}(G)=7=\alpha(G)+\gamma_t(G)$ with the positive labels of \textcolor[rgb]{0.00,0.00,0.00}{a} $\gamma_{oitR}(G)$-function.}\label{fig-1}
\end{figure}

The next result, which also improves the upper bound given in Theorem \ref{teo-oitR-VC-Cabrera2019}, is an immediate consequence of Theorem \ref{teo-principal} and Observation \ref{obs-teo-principal} (ii).

\begin{theorem}\label{teo-cons-teo-principal}
For any graph $G$ with no isolated vertex,
$$\gamma_{oitR}(G)\leq \alpha(G)+\gamma_R(G)\leq \alpha(G)+2\gamma(G).$$
\end{theorem}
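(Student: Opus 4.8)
The statement to prove is Theorem~\ref{teo-cons-teo-principal}:
$$\gamma_{oitR}(G)\leq \alpha(G)+\gamma_R(G)\leq \alpha(G)+2\gamma(G).$$

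The plan is to derive this directly as a consequence of the two earlier results, exactly as the excerpt announces. The key observation is that the first upper bound in Theorem~\ref{teo-principal} reads $\gamma_{oitR}(G)\leq \alpha(G)+\gamma_t(G)$, and Observation~\ref{obs-teo-principal}~(ii) records the chain $\gamma_t(G)\leq \gamma_R(G)\leq 2\gamma(G)$. So the entire argument is a substitution: First I would invoke Theorem~\ref{teo-principal} to obtain $\gamma_{oitR}(G)\leq \alpha(G)+\gamma_t(G)$. Then I would apply the inequality $\gamma_t(G)\leq \gamma_R(G)$ from Observation~\ref{obs-teo-principal}~(ii) to conclude $\alpha(G)+\gamma_t(G)\leq \alpha(G)+\gamma_R(G)$, which yields the first claimed inequality. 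Finally I would apply $\gamma_R(G)\leq 2\gamma(G)$ from the same observation to get $\alpha(G)+\gamma_R(G)\leq \alpha(G)+2\gamma(G)$, giving the second inequality and completing the chain.

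One small caveat worth checking concerns the hypotheses. Theorem~\ref{teo-principal} and Observation~\ref{obs-teo-principal} both carry the standing assumption that the maximum degree satisfies $\Delta\geq 2$ (equivalently, $G$ is not a disjoint union of edges). I would want to confirm that the desired inequality also holds in the remaining degenerate case where every component of $G$ is a single edge $K_2$. For such a $G$ of order $n$ one has $\alpha=\gamma=\gamma_R=n/2$ per component contributing the obvious values, and $\gamma_{oitR}$ of a $K_2$ equals $2$, so the bounds hold trivially there as well; alternatively, since the paper consistently treats $\Delta\geq 2$ as the operative regime, one may simply inherit that hypothesis.

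Because this is a formal consequence of already-established results, there is essentially no mathematical obstacle: the proof is a two-line deduction. The only thing requiring care is bookkeeping, namely making sure the correct branch of Theorem~\ref{teo-principal} is cited (the general upper bound $\alpha(G)+\gamma_t(G)$, valid for all graphs with no isolated vertex, rather than the claw-free refinement $\alpha(G)+\gamma(G)$) and that both pieces of Observation~\ref{obs-teo-principal}~(ii) are threaded together in the right order. I would write the proof as a single displayed chain of inequalities to make the dependence transparent, and I expect it to occupy no more than a couple of sentences.
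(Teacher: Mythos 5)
Your proposal is correct and is exactly the paper's argument: the paper gives no separate proof, stating only that the theorem is an immediate consequence of Theorem~\ref{teo-principal} (the bound $\gamma_{oitR}(G)\leq\alpha(G)+\gamma_t(G)$) and Observation~\ref{obs-teo-principal}~(ii), which is precisely your two-step substitution. Your additional check of the degenerate case $\Delta=1$ (disjoint unions of $K_2$) is a sound piece of care that the paper omits; just note that for such a graph $\gamma_R(G)=n$ rather than $n/2$, which does not affect the conclusion since $\gamma_{oitR}(G)=n\leq n/2+n=\alpha(G)+\gamma_R(G)\leq\alpha(G)+2\gamma(G)$ still holds.
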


With respect to the equality in the bound $\gamma_{oitR}(G)\leq \alpha(G)+2\gamma(G)$ above, we can deduce the following connection. To this end, we need to say that a graph $G$ is called a \emph{Roman graph} if $\gamma_R(G)=2\gamma(G)$.

\begin{proposition}
If $G$ is a graph such that $\gamma_{oitR}(G)=\alpha(G)+2\gamma(G)$, then $G$ is a Roman graph.
\end{proposition}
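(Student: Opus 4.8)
The plan is to prove the contrapositive flavor of the statement directly by chaining the relevant inequalities. We are given that $\gamma_{oitR}(G)=\alpha(G)+2\gamma(G)$, and we want to conclude $\gamma_R(G)=2\gamma(G)$. Theorem~\ref{teo-cons-teo-principal} gives us the double inequality $\gamma_{oitR}(G)\leq \alpha(G)+\gamma_R(G)\leq \alpha(G)+2\gamma(G)$, which is exactly the engine we need.

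The key observation is that the hypothesis forces both inequalities in that chain to be equalities. More precisely, I would start from
$$\alpha(G)+2\gamma(G)=\gamma_{oitR}(G)\leq \alpha(G)+\gamma_R(G)\leq \alpha(G)+2\gamma(G),$$
where the leftmost equality is the hypothesis and the two inequalities are Theorem~\ref{teo-cons-teo-principal}. Since the first and last terms coincide, every intermediate term must equal them as well; in particular the sandwiched quantity $\alpha(G)+\gamma_R(G)$ is squeezed to equal $\alpha(G)+2\gamma(G)$. Subtracting $\alpha(G)$ from both sides then yields $\gamma_R(G)=2\gamma(G)$, which is precisely the definition of $G$ being a Roman graph.

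I do not anticipate a genuine obstacle here, since the result is essentially a squeeze argument combined with the definition just introduced before the statement. The only thing to be mildly careful about is to invoke Theorem~\ref{teo-cons-teo-principal} correctly (both inequalities at once, not just one), and to note that $\gamma_R(G)\ge 2\gamma(G)$ is not needed separately because the upper bound $\gamma_R(G)\le 2\gamma(G)$ from Observation~\ref{obs-teo-principal}(ii) together with the forced equality already pins $\gamma_R(G)$ down exactly. Thus the proof reduces to displaying the squeeze and reading off the equality defining a Roman graph.
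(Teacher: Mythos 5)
Your proposal is correct and is essentially identical to the paper's own proof: both start from the hypothesis, invoke the chain $\gamma_{oitR}(G)\leq \alpha(G)+\gamma_R(G)\leq \alpha(G)+2\gamma(G)$ from Theorem~\ref{teo-cons-teo-principal}, and squeeze to force $\gamma_R(G)=2\gamma(G)$. Nothing further is needed.
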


\begin{proof}
From Theorem \ref{teo-cons-teo-principal}, we have that $\alpha(G)+2\gamma(G)=\gamma_{oitR}(G)\leq \alpha(G)+\gamma_R(G)\leq \alpha(G)+2\gamma(G)$. Thus, we must have equalities in the whole inequality chain. In particular, we conclude that $\gamma_R(G)=2\gamma(G)$, and so, $G$ is a Roman graph.
\end{proof}

Notice that the opposed to the result above is not necessarily true. For instance, any complete bipartite graph $K_{r,s}$, with $3\le r\le s$, is a Roman graph, but it does not satisfy the equality since, by Remark \ref{rem:bipartite}, $\alpha(K_{r,s})+2\gamma(K_{r,s})=r+4\ne r+2=\gamma_{oitR}(K_{r,s})$.

Concerning the outer-independent Roman domination number and the \textcolor[rgb]{0.00,0.00,0.00}{outer-independent total} domination number, which are closely related to $\gamma_{oitR}(G)$, in \cite{Cabrera2019}, the authors showed the following results.

\begin{theorem}\label{teo-oitR-oiR-toid-Cabrera2019}{\rm \cite{Cabrera2019}}
The following statements hold for any graph $G$ of order $n\geq 3$ with no isolated vertex.
\begin{enumerate}
\item[{\rm (i)}] $\gamma_{t,oi}(G)+1\leq \gamma_{oitR}(G)\leq 2\gamma_{t,oi}(G)$.
\item[{\rm (ii)}] If $f(V_0,V_1,V_2)$ is a $\gamma_{oiR}(G)$-function, then $\gamma_{oiR}(G)\leq \gamma_{oitR}(G)\leq \gamma_{oiR}(G)+|V_1|+|V_2|$.
\end{enumerate}
\end{theorem}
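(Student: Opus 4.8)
The plan is to treat the four inequalities separately, since each one reduces to building an appropriate function (or set) from an optimal one of a related type. I would dispose of the two ``free'' bounds first by direct inclusion. For the upper bound $\gamma_{oitR}(G)\le 2\gamma_{t,oi}(G)$ of (i), take a $\gamma_{t,oi}(G)$-set $D$ and define $f$ by $V_2=D$, $V_1=\emptyset$, $V_0=V(G)\setminus D$. Since $D$ is a total dominating set, every vertex of $V_0$ has a neighbor in $V_2=D$, the set $V_1\cup V_2=D$ induces a subgraph without isolated vertices, and outer-independence of $D$ makes $V_0$ independent; hence $f$ is an OITRDF of weight $2|D|=2\gamma_{t,oi}(G)$. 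For the lower bound $\gamma_{oiR}(G)\le\gamma_{oitR}(G)$ of (ii), I would simply observe that every OITRDF is, by definition, also an OIRDF, so minimizing the weight over the larger family can only produce a smaller value.

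For the lower bound in (i), let $f(V_0,V_1,V_2)$ be a $\gamma_{oitR}(G)$-function. Then $V_1\cup V_2$ is a total dominating set whose complement $V_0$ is independent, i.e.\ an outer-independent total dominating set, so $\gamma_{t,oi}(G)\le|V_1|+|V_2|$. Rewriting the weight as $\gamma_{oitR}(G)=|V_1|+2|V_2|=(|V_1|+|V_2|)+|V_2|\ge\gamma_{t,oi}(G)+|V_2|$, the only thing left is to secure the extra $+1$, that is, $|V_2|\ge1$. If $V_2=\emptyset$, the Roman condition forces $V_0=\emptyset$, so $f\equiv1$ and $\gamma_{oitR}(G)=n$; I would then invoke $\gamma_{t,oi}(G)\le n-1$ (deleting a suitable leaf, or any vertex when $\delta\ge2$, leaves an outer-independent total dominating set) to conclude $\gamma_{oitR}(G)\ge\gamma_{t,oi}(G)+1$ in this case as well.

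For the upper bound in (ii), I would start from a $\gamma_{oiR}(G)$-function $f(V_0,V_1,V_2)$, which can fail to be an OITRDF only because $V_1\cup V_2$ has vertices isolated in its induced subgraph. I repair each such isolated vertex $u$: all of its neighbors lie in $V_0$, and it has at least one (as $G$ has no isolated vertex), so I raise one fixed neighbor $u'\in V_0$ from $0$ to $1$. This keeps $V_0$ independent (we only remove vertices from it), preserves the Roman condition (no label is lowered), and renders both $u$ and $u'$ non-isolated in the new positive set. The number of raises is at most the number of isolated vertices, hence at most $|V_1\cup V_2|=|V_1|+|V_2|$, so the resulting OITRDF $g$ satisfies $\omega(g)\le\omega(f)+|V_1|+|V_2|=\gamma_{oiR}(G)+|V_1|+|V_2|$, as required.

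The main obstacle is the seemingly trivial step $|V_2|\ge1$ in the lower bound of (i): the all-ones function is a genuine OITRDF, so this case cannot be discarded formally and one must fall back on $\gamma_{t,oi}(G)\le n-1$, which is exactly where the hypotheses do real work. Indeed, for $G=mK_2$ one has $\gamma_{t,oi}(G)=\gamma_{oitR}(G)=n$, so some assumption beyond ``no isolated vertex'' (connectedness, or at least excluding unions of edges) is what makes the strict inequality hold. The repair argument for (ii) is the other delicate point, but since we only ever delete vertices from $V_0$ and never decrease a label, checking that the Roman and independence properties survive is routine.
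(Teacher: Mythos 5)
Your proposal cannot be checked against a proof in this paper, because the paper states Theorem \ref{teo-oitR-oiR-toid-Cabrera2019} only as an imported result from \cite{Cabrera2019}, with no argument given; so I judge your proof on its own merits. All four constructions are correct and are the natural ones: doubling a $\gamma_{t,oi}(G)$-set gives the upper bound in (i); every OITRDF is by definition an OIRDF, giving the lower bound in (ii); the positive set of a $\gamma_{oitR}(G)$-function is an outer-independent total dominating set, giving $\gamma_{oitR}(G)\ge \gamma_{t,oi}(G)+|V_2|$; and your repair of the isolated vertices of the subgraph induced by $V_1\cup V_2$, by promoting one $V_0$-neighbor of each from $0$ to $1$, cleanly proves the upper bound in (ii), since labels only increase, $V_0$ only shrinks (so independence and the Roman condition survive), and the number of promotions is at most $|V_1|+|V_2|$.

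Your remark about the case $V_2=\emptyset$ is moreover a genuine catch rather than a formality. As transcribed here (``any graph of order $n\ge 3$ with no isolated vertex''), part (i) is actually false: for $G=2K_2$ every total dominating set must contain all four vertices and every OITRDF must have $V_0=\emptyset$, so $\gamma_{t,oi}(G)=\gamma_{oitR}(G)=4$ and the claimed inequality $\gamma_{t,oi}(G)+1\le\gamma_{oitR}(G)$ fails. Your fallback $\gamma_{t,oi}(G)\le n-1$ (remove a non-support vertex, which exists in every component of order at least $3$) is exactly what restores the bound under connectivity, or more generally whenever $G$ has no $K_2$ component; this connectivity hypothesis is the one under which the original reference establishes the result, and the present paper's loosening of it is unjustified for part (i). The remaining three inequalities do hold in the stated generality, as your arguments show.
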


We now provide a result which improves the upper bounds given in Theorem \ref{teo-oitR-oiR-toid-Cabrera2019}.

\begin{theorem}\label{teo-oitR-oiR-toid}
For any graph $G$ with no isolated vertex,
$$\gamma_{oitR}(G)\leq \min\{\gamma_{t,oi}(G),\gamma_{oiR}(G)\}+\gamma(G).$$
\end{theorem}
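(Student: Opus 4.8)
The plan is to establish the two inequalities $\gamma_{oitR}(G)\le \gamma_{t,oi}(G)+\gamma(G)$ and $\gamma_{oitR}(G)\le \gamma_{oiR}(G)+\gamma(G)$ separately; taking the smaller of the two upper bounds then yields the stated inequality with the minimum. In both cases I would fix a $\gamma(G)$-set $D$ and use it to repair the single defect that prevents an optimal outer-independent total dominating set (respectively, an optimal OIRD function) from being an OITRDF, while controlling the extra weight by $|D|=\gamma(G)$. The recurring device is to ``project'' $D$ into the relevant vertex-cover-type set: whenever a vertex $x$ is dominated only by itself in $D$ (i.e. $x\in D$) rather than by a neighbor, I replace $x$ by a suitable neighbor, and the independence hypotheses guarantee that such a neighbor lies exactly where it is needed.

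For the first inequality I would start from a $\gamma_{t,oi}(G)$-set $T$, so that $T$ is a total dominating set and $V(G)\setminus T$ is independent (hence $T$ is a vertex cover). Setting $V_1\cup V_2=T$ already makes $V_0=V(G)\setminus T$ independent and makes the subgraph induced by $V_1\cup V_2$ have no isolated vertex. The only missing OITRDF condition is that every vertex of $V_0$ has a neighbor in $V_2$, so the task reduces to choosing $V_2\subseteq T$ that dominates $V(G)\setminus T$ with $|V_2|\le\gamma(G)$; then $\omega(f)=|T|+|V_2|\le\gamma_{t,oi}(G)+\gamma(G)$. To build such a $V_2$, for each $x\in V(G)\setminus T$ I use its dominator $d_x\in D$: if $d_x\in N(x)$ then $d_x\in T$ (all neighbors of $x$ lie in $T$ because $V(G)\setminus T$ is independent), so I keep $d_x$; if instead $x=d_x\in D$, I replace it by any neighbor $t_x\in T$, which exists since $T$ totally dominates $G$. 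Counting the two types against $D\cap T$ and $D\setminus T$ respectively gives $|V_2|\le|D|=\gamma(G)$.

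For the second inequality I would take a $\gamma_{oiR}(G)$-function $g$ with positive set $P=W_1\cup W_2$. The Roman condition ``every vertex of $W_0$ has a neighbor in $W_2$'' and the independence of $W_0$ are already in place; the only obstruction to $g$ being an OITRDF is that the subgraph induced by $P$ may contain isolated vertices, forming a set $I\subseteq P$ each of whose neighbors lies in $W_0$. I would fix this by raising to $1$ a set $R\subseteq W_0$ that dominates $I$, and then show $|R|\le\gamma(G)$; the new function has weight $\gamma_{oiR}(G)+|R|\le\gamma_{oiR}(G)+\gamma(G)$, keeps $V_0\subseteq W_0$ independent and the Roman condition intact, and now $G[P\cup R]$ has no isolated vertex (each $v\in I$ acquires a neighbor in $R$, and each $r\in R$ is adjacent to the vertex of $I$ it was chosen for). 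The set $R$ is again obtained by projecting $D$ onto $I$: for $v\in I\setminus D$ its dominator lies in $D\cap W_0$, while for $v\in I\cap D$ I pick a neighbor in $W_0$; bounding these by $|D\cap W_0|$ and $|I\cap D|\le|D\cap P|$ gives $|R|\le|D|=\gamma(G)$.

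The main obstacle in both parts is the size control, not the verification of the domination and independence conditions. The delicate point is handling the vertices that are ``self-dominated'' by $D$ (the case $x\in D$ in the first part, or $v\in I\cap D$ in the second): these cannot be dominated by $D$ within the required set, so they must be re-routed to a neighbor, and one must check that this re-routing does not double count. The independence of $V(G)\setminus T$ (respectively $W_0$) is exactly what forces the relevant neighbors into $T$ (respectively keeps the two contributions disjointly charged to $D\cap T$ and $D\setminus T$), and this is what makes the bookkeeping close out at $|D|=\gamma(G)$ rather than at something larger.
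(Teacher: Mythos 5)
Your proof is correct and follows the paper's overall strategy---prove $\gamma_{oitR}(G)\le \gamma_{t,oi}(G)+\gamma(G)$ and $\gamma_{oitR}(G)\le \gamma_{oiR}(G)+\gamma(G)$ separately, each time paying at most $\gamma(G)$ extra on top of an optimal object for the weaker parameter---but your constructions differ from the paper's in both halves. In the first half, writing $T$ for the $\gamma_{t,oi}(G)$-set and $D$ for the $\gamma(G)$-set, the paper sets $V_2=T\cap D$, $V_1=(T\cup D)\setminus (T\cap D)$ and $V_0=V(G)\setminus (T\cup D)$; every $x\in V_0$ then satisfies $N(x)\subseteq T$ (independence outside $T$) and $N(x)\cap D\ne\emptyset$ (since $x\notin D$), so $N(x)\cap V_2\ne\emptyset$ automatically and the weight is $|T|+|D|$. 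The self-dominated vertices of $D\setminus T$, which you keep in $V_0$ and repair by promoting a $T$-neighbor into $V_2$, are simply absorbed into $V_1$ by the paper, eliminating all case analysis. In the second half the paper's repair is indexed by the dominating set rather than by the defect: it puts every vertex of $W_0\cap D$ into $V_1$, and for each $x\in (W_1\cup W_2)\cap D$ it moves one neighbor from $N(x)\cap W_0$ (if any) into $V_1$, so that the whole dominating set ends up inside $V_1\cup V_2$, which is what yields totality. You instead identify the set $I$ of isolated vertices of $G[W_1\cup W_2]$ and dominate exactly $I$ by projecting $D$ into $W_0$. Your surgical repairs are more economical---the functions you build may be strictly lighter, though the bound obtained is the same---while the paper's uniform union/intersection rules avoid case distinctions and give the weight count directly; both arguments ultimately rest on the same key fact that independence of the outside set forces the needed dominators into the cover.
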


\begin{proof}
First, we proceed to prove that $\gamma_{oitR}(G)\leq \gamma_{t,oi}(G)+\gamma(G)$. Let $D$ be a $\gamma_{t,oi}(G)$-set and $S$ a $\gamma(G)$-set. Let $f(V_0,V_1,V_2)$ be a function defined by $V_0=V(G)\setminus (D\cup S)$, $V_1=(D\cup S)\setminus (D\cap S)$ and $V_2=D\cap S$. We claim that $f$ is an OITRDF on $G$.
Notice that $V_0\subseteq V(G)\setminus D$ is an independent set and $V_1\cup V_2$ is a total dominating set as $D$ is a \textcolor[rgb]{0.00,0.00,0.00}{outer-independent total} dominating set of $G$. Now, we prove that every vertex in $V_0$ has a neighbor in $V_2$. Let $x\in V_0=V(G)\setminus (D\cup S)$. Since $D$ is also a vertex cover and $S$ is a dominating set, we deduce that $N(x)\subseteq D$ and $N(x)\cap S\neq \emptyset$, respectively. Thus, $N(x)\cap D\cap S\neq \emptyset$, i.e., $N(x)\cap V_2\neq  \emptyset$. Hence, $f$ is an OITRDF on $G$, as required. Thus, $\gamma_{oitR}(G)\leq \omega(f)\leq |(D\cup S)\setminus (D\cap S)|+2|D\cap S|=\gamma_{t,oi}(G)+\gamma(G)$, as desired.

Finally, we proceed to prove that $\gamma_{oitR}(G)\leq \gamma_{oiR}(G)+\gamma(G)$. In this case, let $g(W_0,W_1,W_2)$ be a $\gamma_{oiR}(G)$-function and $S$ a $\gamma(G)$-set. Now, we define a function $f(V_0,V_1,V_2)$ as follows.
\begin{enumerate}[{\rm (i)}]
\item $V_2=W_2$ and $W_1\subseteq V_1$.
\item If $x\in (W_1\cup W_2)\cap S$, then choose a vertex $y\in N(x)\cap W_0$ (\textcolor[rgb]{0.00,0.00,0.00}{if it exists}) and set $y\in V_1$.
\item If $x\in W_0\cap S$, then set $x\in V_1$.
\item For any  other vertex $x\in W_0$ not previously labelled, set $x\in V_0$.
\end{enumerate}
We claim that $f$ is an OITRDF on $G$.
Since $W_0$ is \textcolor[rgb]{0.00,0.00,0.00}{an} independent set and $V_0\subseteq W_0\subseteq N(W_2)$, it follows by (i) that $V_0$ is also an independent set and $V_0\subseteq N(V_2)$. Finally, by (ii), (iii), (iv) and the fact that $S$ is a dominating set, we deduce that $V_1\cup V_2$ is a total dominating set of $G$. Hence, $f$ is an OITRDF on $G$, as desired. Therefore, $\gamma_{oitR}(G)\leq \omega(f)=2|V_2|+|V_1|\leq 2|W_2|+|W_1|+|S|= \gamma_{oiR}(G)+\gamma(G)$, which completes the proof.
\end{proof}

The following result shows a class of graphs which satisfy the equality $\gamma_{oitR}(G)=\gamma_{oiR}(G)$.

\begin{theorem}
For any claw-free graph $G$ of minimum degree $\delta\geq 3$,
$$\gamma_{oitR}(G)=\gamma_{oiR}(G).$$
\end{theorem}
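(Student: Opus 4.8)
The plan is to prove the two inequalities separately, and to observe that in the claw-free setting one of them collapses almost immediately thanks to Lemma~\ref{lem:cover-total-dom-claw}. The inequality $\gamma_{oiR}(G)\leq \gamma_{oitR}(G)$ holds for every graph with no isolated vertex and requires no claw-freeness: every OITRDF is in particular an OIRDF, since the defining conditions of an OIRDF (the Roman condition and the outer-independence of $V_0$) are a subset of the conditions defining an OITRDF, the latter merely adding the requirement that $V_1\cup V_2$ induce a subgraph with no isolated vertex. Hence any $\gamma_{oitR}(G)$-function is an OIRDF and so $\gamma_{oiR}(G)\leq \gamma_{oitR}(G)$.

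For the reverse inequality I would start from a $\gamma_{oiR}(G)$-function $g(W_0,W_1,W_2)$ and argue that it is \emph{already} an OITRDF, so that no weight needs to be added. The key observation is that, since $g$ is an OIRDF, the set $W_0$ is independent, which is equivalent (as noted in the preliminaries) to saying that $W_1\cup W_2$ is a vertex cover of $G$. At this point I would invoke Lemma~\ref{lem:cover-total-dom-claw}: because $G$ is claw-free with $\delta\geq 3$, the vertex cover $W_1\cup W_2$ is also a total dominating set of $G$, and in particular the subgraph induced by $W_1\cup W_2$ has no isolated vertex. This is exactly the one extra condition distinguishing an OITRDF from an OIRDF, so $g$ qualifies as an OITRDF.

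Consequently $\gamma_{oitR}(G)\leq \omega(g)=\gamma_{oiR}(G)$, and combining this with the trivial inequality above yields $\gamma_{oitR}(G)=\gamma_{oiR}(G)$. I do not anticipate a genuine obstacle here: the entire content of the statement is packaged into Lemma~\ref{lem:cover-total-dom-claw}, and the only point requiring a moment's care is the clean identification of ``$W_0$ independent'' with ``$W_1\cup W_2$ is a vertex cover,'' which lets the lemma apply directly to an arbitrary $\gamma_{oiR}(G)$-function rather than to a separately constructed cover. The hypotheses $\delta\geq 3$ and claw-freeness enter only through the lemma, so once that is cited the argument is essentially immediate.
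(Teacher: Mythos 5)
Your proposal is correct and follows exactly the paper's own argument: start from a $\gamma_{oiR}(G)$-function, note that $V_1\cup V_2$ is a vertex cover since $V_0$ is independent, and apply Lemma~\ref{lem:cover-total-dom-claw} to conclude the function is already an OITRDF. The only cosmetic difference is that you prove the trivial inequality $\gamma_{oiR}(G)\leq\gamma_{oitR}(G)$ directly, whereas the paper cites it from Theorem~\ref{teo-oitR-oiR-toid-Cabrera2019}~(ii).
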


\begin{proof}
Let $f(V_0,V_1,V_2)$ be a $\gamma_{oiR}(G)$-function. Since $V_0$ is an independent set, we have that $V_1\cup V_2$ is a vertex cover of $G$. As every vertex cover in a claw-free graph of minimum degree $\delta\geq 3$ is also a total \textcolor[rgb]{0.00,0.00,0.00}{dominating set by Lemma \ref{lem:cover-total-dom-claw}}, we deduce that $f$ is also an OITRDF. Hence, $\gamma_{oitR}(G)\leq \omega(f)=\gamma_{oiR}(G)$. Theorem \ref{teo-oitR-oiR-toid-Cabrera2019} (ii) completes the proof.
\end{proof}

Moreover, by Theorems \ref{teo-oitR-oiR-toid-Cabrera2019} and \ref{teo-oitR-oiR-toid}, we deduce that the graphs $G$ with $\gamma_{oitR}(G)>\gamma_{oiR}(G)$ satisfy the following  inequality chain.
\begin{equation}\label{ineq-1}
\max\{\gamma_{t,oi}(G),\gamma_{oiR}(G)\}+1\leq \gamma_{oitR}(G)\leq \min\{\gamma_{t,oi}(G),\gamma_{oiR}(G)\}+\gamma(G).
\end{equation}

For instance, for \textcolor[rgb]{0.00,0.00,0.00}{the complete bipartite graph} $K_{1,n-1}$ we obtain equalities   through the previous inequality chain, i.e., $\gamma_{t,oi}(K_{1,n-1})+1=\gamma_{oiR}(K_{1,n-1})+1= \gamma_{oitR}(K_{1,n-1})=\gamma_{t,oi}(K_{1,n-1})+\gamma(K_{1,n-1})=\gamma_{oiR}(K_{1,n-1})+\gamma(K_{1,n-1})$. In contrast with the example above, if we consider $G\in \mathcal{F}_{p,q}$ (as defined in Section~\ref{Intro}), we note that there are graphs achieving a strict inequality in all the steps of the \textcolor[rgb]{0.00,0.00,0.00}{inequality} chain (\ref{ineq-1}). That is, for any graph $G\in \mathcal{F}_{p,q}$, (by using Remark \ref{rem:F-p-q}) it follows that
\begin{align*}
  \gamma_{t,oi}(G)+1=1+2p+q+\sum_{i=1}^{q}r_i & < \gamma_{oiR}(G)+1=1+2p+2q+\sum_{i=1}^{q}r_i\\
   & <\gamma_{oitR}(G)= 3p+3q+\sum_{i=1}^{q}r_i\\
   & <\gamma_{t,oi}(G)+\gamma(G)=2p+q+\sum_{i=1}^{q}r_i+p+3q=3p+4q+\sum_{i=1}^{q}r_i\\
   & < \gamma_{oiR}(G)+\gamma(G)=2p+2q+\sum_{i=1}^{q}r_i+p+3q=3p+5q+\sum_{i=1}^{q}r_i.
\end{align*}

\begin{theorem}\label{teo-paper-TOkID}{\rm\cite{teo-paper-TOkID}}
For any connected graph $G$ of  minimum degree $\delta$, $$\gamma_{t,oi}(G)\leq 2\alpha(G)-\delta+1.$$
\end{theorem}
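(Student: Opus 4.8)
The plan is to produce an outer-independent total dominating set of the claimed weight by starting from a minimum vertex cover and repairing it as cheaply as possible. First I would let $C$ be an $\alpha(G)$-set (a minimum vertex cover), so that $I=V(G)\setminus C$ is a maximum independent set with $|I|=n-\alpha(G)$. Since $C$ is a vertex cover it already dominates $G$, and every vertex of $I$ has all of its neighbours inside $C$; the only obstruction to $C$ being an outer-independent total dominating set is the presence of vertices that are isolated in the induced subgraph $G[C]$. I would collect these into a set $A$, and observe that each $a\in A$ has $N(a)\subseteq I$ and hence at least $\delta$ neighbours in $I$. I also record that $\alpha(G)\ge\delta$: any vertex $w$ of the (nonempty) independent set $I$ satisfies $N(w)\subseteq C$ with $|N(w)|\ge\delta$, so $|C|\ge\delta$.

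Next I would do the repair. If $A=\emptyset$, then $C$ is already outer-independent total dominating and $\gamma_{t,oi}(G)\le\alpha(G)\le 2\alpha(G)-\delta+1$. Otherwise let $T\subseteq I$ be a \emph{minimum} subset of $I$ that dominates $A$ (such a set exists since every vertex of $A$ has a neighbour in $I$), and put $D=C\cup T$. I would then verify that $D$ is an outer-independent total dominating set: its complement $I\setminus T\subseteq I$ is independent; the vertices of $A$ are dominated by $T$; the vertices of $C\setminus A$ keep a neighbour in $C$; each vertex of $I\setminus T$ has a neighbour in $C$; and each $w\in T$ is adjacent to some vertex of $A\subseteq D$ by the minimality of $T$. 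Hence $D$ is a total dominating set with independent complement, and $\gamma_{t,oi}(G)\le|D|=\alpha(G)+|T|$ because $C$ and $T\subseteq I$ are disjoint.

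The crux — and the step I expect to be the main obstacle — is the bound $|T|\le\alpha(G)-\delta+1$, which is exactly what converts the trivial estimate $|T|\le|A|\le\alpha(G)$ into the required saving of $\delta-1$. Here I would exploit that $T$ is a \emph{minimal} dominating set of $A$: each $w\in T$ admits a private vertex $a_w\in A$ whose only neighbour in $T$ is $w$, and the assignment $w\mapsto a_w$ is injective, so $\{a_w:w\in T\}\subseteq A\subseteq C$ has size exactly $|T|$. Now fix any $w_0\in T$. Since $w_0\in I$ and $C$ is a vertex cover, all of the $\ge\delta$ neighbours of $w_0$ lie in $C$; and by the privacy property $w_0$ meets the set of private vertices in the single vertex $a_{w_0}$. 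Thus $w_0$ has at least $\delta-1$ neighbours in $C\setminus\{a_w:w\in T\}$, a set of cardinality $\alpha(G)-|T|$, which forces $\delta-1\le\alpha(G)-|T|$, that is $|T|\le\alpha(G)-\delta+1$. Combined with the previous step this gives $\gamma_{t,oi}(G)\le 2\alpha(G)-\delta+1$. The delicate point to get right is precisely this last count: it relies simultaneously on $w_0$ hitting the private set in exactly one vertex and on $w_0$, as a member of the independent set $I$, having all of its neighbours confined to $C$.
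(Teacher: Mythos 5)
Your proposal is correct, but there is nothing in the paper to compare it against: this bound appears as Theorem \ref{teo-paper-TOkID} only as a quoted result, imported from \cite{teo-paper-TOkID} without any proof being reproduced, so your argument stands as an independent, self-contained derivation. Checking it step by step, it is sound: starting from a minimum vertex cover $C$ with independent complement $I$, the only obstruction to $C$ being an outer-independent total dominating set is indeed the set $A$ of vertices isolated in $G[C]$; adjoining a minimum set $T\subseteq I$ dominating $A$ gives $D=C\cup T$ whose complement $I\setminus T$ is independent and which is total dominating (vertices of $A$ are hit by $T$, vertices of $C\setminus A$ and of $I\setminus T$ by $C$, and each $w\in T$ has a neighbour in $A$ by minimality of $T$). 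The crux, $|T|\le\alpha(G)-\delta+1$, also holds up: minimality of $T$ yields an injective assignment $w\mapsto a_w$ of private vertices $a_w\in A$ with $N(a_w)\cap T=\{w\}$; a fixed $w_0\in T$ then meets $\{a_w:w\in T\}$ only in $a_{w_0}$, and since $N(w_0)\subseteq C$ with $|N(w_0)|\ge\delta$, at least $\delta-1$ neighbours of $w_0$ lie in $C\setminus\{a_w:w\in T\}$, forcing $\alpha(G)-|T|\ge\delta-1$. Together with the case $A=\emptyset$ (where $\alpha(G)\ge\delta$ suffices), this gives $\gamma_{t,oi}(G)\le\alpha(G)+|T|\le 2\alpha(G)-\delta+1$. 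The only implicit hypotheses are that $G$ is nontrivial, so that $\gamma_{t,oi}(G)$ is defined and $\delta\ge 1$, and that $I\ne\emptyset$, which holds since $\beta(G)\ge 1$; both are harmless and worth one sentence if you write this up.
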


Next, we provide a new upper bound for the outer-independent total Roman domination number, which is an immediate consequence of Theorems \ref{teo-oitR-oiR-toid} and \ref{teo-paper-TOkID}. Notice that this result improves Theorem \ref{teo-cons-teo-principal} for the graphs $G$ that satisfy the inequality $\alpha(G)\leq \gamma(G)+\delta-1$.

\begin{theorem}
For any connected graph $G$ of minimum degree $\delta$,
$$\gamma_{oitR}(G)\leq 2\alpha(G)+\gamma(G)-\delta+1.$$
\end{theorem}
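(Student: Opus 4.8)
The plan is to obtain this inequality directly by chaining the two results the paper flags immediately before the statement, namely Theorem~\ref{teo-oitR-oiR-toid} and Theorem~\ref{teo-paper-TOkID}. There is essentially no new combinatorial work to do: the target bound is a linear combination of two inequalities already in hand, so the whole argument is a short substitution. I would not attempt to re-prove either ingredient, since both may be invoked as established facts.

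First I would recall from Theorem~\ref{teo-oitR-oiR-toid} that for any graph $G$ with no isolated vertex,
\[
\gamma_{oitR}(G)\leq \min\{\gamma_{t,oi}(G),\gamma_{oiR}(G)\}+\gamma(G)\leq \gamma_{t,oi}(G)+\gamma(G),
\]
where the second inequality merely drops the minimum in favour of its $\gamma_{t,oi}(G)$ term (this is all I need, so I would not carry the $\gamma_{oiR}(G)$ branch further). A connected graph is in particular a graph with no isolated vertex, so the hypothesis of Theorem~\ref{teo-oitR-oiR-toid} is met.

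Next I would apply Theorem~\ref{teo-paper-TOkID}, which asserts that for any connected graph $G$ of minimum degree $\delta$,
\[
\gamma_{t,oi}(G)\leq 2\alpha(G)-\delta+1.
\]
Substituting this into the previous display yields
\[
\gamma_{oitR}(G)\leq \gamma_{t,oi}(G)+\gamma(G)\leq \bigl(2\alpha(G)-\delta+1\bigr)+\gamma(G)=2\alpha(G)+\gamma(G)-\delta+1,
\]
which is exactly the claimed bound. The connectivity hypothesis is used only to license the invocation of Theorem~\ref{teo-paper-TOkID}, which is stated for connected graphs.

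I do not expect any genuine obstacle here, precisely because the result is billed as an immediate consequence; the only point requiring a moment's care is to keep the direction of each inequality aligned (both are upper bounds, so they compose without sign issues) and to confirm that the hypotheses transfer cleanly, which they do since ``connected'' implies ``no isolated vertex.'' If anything, the cleanest presentation is a single displayed chain of inequalities with the two cited theorems labelling the respective steps.
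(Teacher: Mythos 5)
Your proposal is correct and is precisely the paper's argument: the authors state this theorem as an immediate consequence of Theorems~\ref{teo-oitR-oiR-toid} and~\ref{teo-paper-TOkID}, obtained exactly by taking the $\gamma_{t,oi}(G)$ branch of the minimum and substituting the bound $\gamma_{t,oi}(G)\leq 2\alpha(G)-\delta+1$. Your attention to the hypothesis transfer (connected implies no isolated vertex) is the only detail the paper leaves implicit, and you handle it correctly.
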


The bound above is tight for the case of star graphs, and in connection with this fact, we pose the following question.\\

\noindent
\textbf{Open question:} Is it the case that $\gamma_{oitR}(G)= 2\alpha(G)+\gamma(G)-\delta+1$ if and only if $G$ is a star graph?

\section{Exact formulas for some families of graphs}

This section is centered into giving the exact value of the outer-independent total Roman domination number of some significant families of graphs, that have been frequently studied in the literature in connection with several domination related invariants.

\subsection{Sierpi\'{n}ski graphs}

Sierpi\'nski graphs were introduced in \cite{Klavzar1997}. However, they were named in this way a little further in \cite{Klavzar2002}. For integers $n\geq 2$ and $p\geq 3$, the Sierpi\'nski graph $S_p^n$ is defined on the vertex set $\{0, 1,\ldots, p-1\}^n$, where two different vertices $(i_1,\ldots, i_n)$ and $(j_1, \ldots, j_n)$ are adjacent if and only if there exists an index $r$ in $\{1, \ldots, n\}$ such that the following follows.
\begin{itemize}
\item[(i)]   $i_t = j_t$, for $t=1,\ldots,r-1$;
\item[(ii)]  $i_r \neq j_r$; and
\item[(iii)] $i_t = j_r$ and $j_t = i_r$ for $t=r+1,\ldots,n$.
\end{itemize}
Three representative examples of Sierpi\'nski graphs are shown in Figure \ref{fig:Sierpinskis}. On the other hand, any Sierpi\'nski graph $S_p^n$ can be constructed by using an inductive manner. That is, we begin this construction with a clique of cardinality $p$ (a $p$-clique from now on), which can precisely be seen as $S_p^1$. Now, in order to construct $S_p^2$, we take $p$ copies of the $p$-clique and connect one with each other through an edge set in ``1-to-1'' correspondence with the edges of one $p$-clique. This procedure is repeated $n$ times to obtain our desired $S_p^n$. In general, we can recursively construct any $S_p^n$ by connecting $p$ copies of $S_p^{n-1}$ by using a set of $\frac{p(p-1)}{2}$ edges. The first two graphs of Figure \ref{fig:Sierpinskis} shows this process of construction, while using a clique of cardinality four. Although, we could include here several references to a large number of works which deals with Sierpi\'nski graph, it is not our goal to make so. Thus, for more information on such graph, we suggest the reader the nice survey \cite{Hinz2017}. We next compute the outer-independent total Roman domination number of Sierpi\'nski graphs.

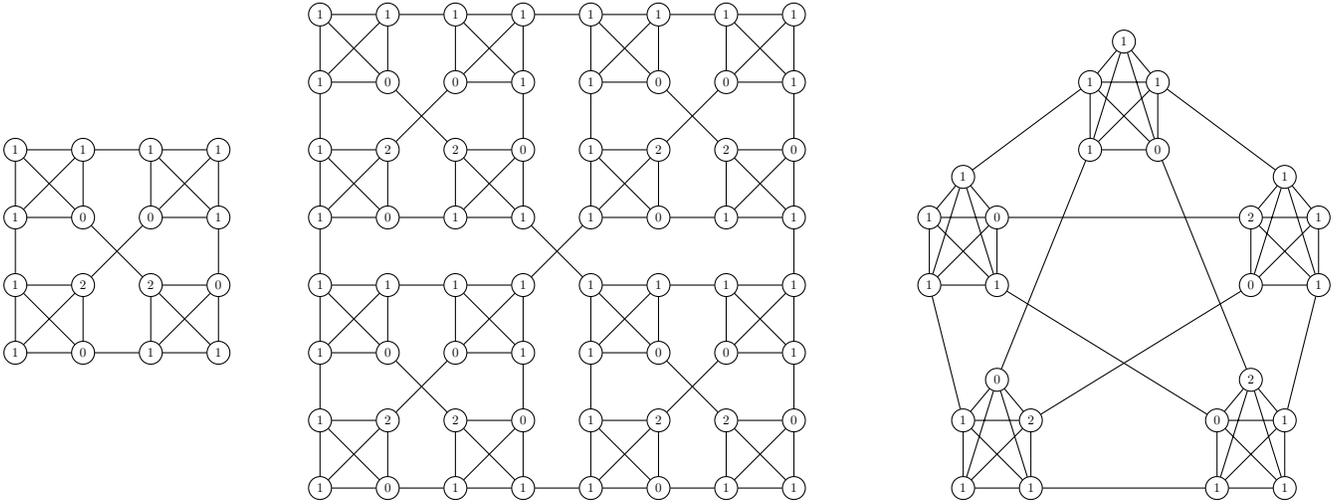
\begin{figure}[ht]
\centering
\begin{tikzpicture}[scale=.45, transform shape]

\node [draw, shape=circle] (b1) at  (3,-3) {1};
\node [draw, shape=circle] (b2) at  (1,-3) {1};
\node [draw, shape=circle] (b3) at  (1,-1) {2};
\node [draw, shape=circle] (b4) at  (3,-1) {0};

\node [draw, shape=circle] (c1) at  (-1,-3) {0};
\node [draw, shape=circle] (c2) at  (-3,-3) {1};
\node [draw, shape=circle] (c3) at  (-1,-1) {2};
\node [draw, shape=circle] (c4) at  (-3,-1) {1};

\node [draw, shape=circle] (d1) at  (-1,3) {1};
\node [draw, shape=circle] (d2) at  (-3,3) {1};
\node [draw, shape=circle] (d3) at  (-3,1) {1};
\node [draw, shape=circle] (d4) at  (-1,1) {0};

\node [draw, shape=circle] (e1) at  (3,3) {1};
\node [draw, shape=circle] (e2) at  (1,3) {1};
\node [draw, shape=circle] (e3) at  (1,1) {0};
\node [draw, shape=circle] (e4) at  (3,1) {1};

\node [draw, shape=circle] (b11) at  (20,-7) {1};
\node [draw, shape=circle] (b21) at  (18,-7) {1};
\node [draw, shape=circle] (b31) at  (18,-5) {2};
\node [draw, shape=circle] (b41) at  (20,-5) {0};
\node [draw, shape=circle] (c11) at  (16,-7) {0};
\node [draw, shape=circle] (c21) at  (14,-7) {1};
\node [draw, shape=circle] (c31) at  (14,-5) {1};
\node [draw, shape=circle] (c41) at  (16,-5) {2};
\node [draw, shape=circle] (d11) at  (16,-3) {0};
\node [draw, shape=circle] (d21) at  (14,-3) {1};
\node [draw, shape=circle] (d31) at  (14,-1) {1};
\node [draw, shape=circle] (d41) at  (16,-1) {1};
\node [draw, shape=circle] (e11) at  (20,-3) {1};
\node [draw, shape=circle] (e21) at  (18,-3) {0};
\node [draw, shape=circle] (e31) at  (18,-1) {1};
\node [draw, shape=circle] (e41) at  (20,-1) {1};

\node [draw, shape=circle] (b12) at  (12,-7) {1};
\node [draw, shape=circle] (b22) at  (10,-7) {1};
\node [draw, shape=circle] (b32) at  (10,-5) {2};
\node [draw, shape=circle] (b42) at  (12,-5) {0};
\node [draw, shape=circle] (c12) at  (8,-7) {0};
\node [draw, shape=circle] (c22) at  (6,-7) {1};
\node [draw, shape=circle] (c32) at  (6,-5) {1};
\node [draw, shape=circle] (c42) at  (8,-5) {2};
\node [draw, shape=circle] (d12) at  (8,-3) {0};
\node [draw, shape=circle] (d22) at  (6,-3) {1};
\node [draw, shape=circle] (d32) at  (6,-1) {1};
\node [draw, shape=circle] (d42) at  (8,-1) {1};
\node [draw, shape=circle] (e12) at  (12,-3) {1};
\node [draw, shape=circle] (e22) at  (10,-3) {0};
\node [draw, shape=circle] (e32) at  (10,-1) {1};
\node [draw, shape=circle] (e42) at  (12,-1) {1};

\node [draw, shape=circle] (b13) at  (12,7) {1};
\node [draw, shape=circle] (b23) at  (10,7) {1};
\node [draw, shape=circle] (b33) at  (10,5) {0};
\node [draw, shape=circle] (b43) at  (12,5) {1};
\node [draw, shape=circle] (c13) at  (8,7) {1};
\node [draw, shape=circle] (c23) at  (6,7) {1};
\node [draw, shape=circle] (c33) at  (6,5) {1};
\node [draw, shape=circle] (c43) at  (8,5) {0};
\node [draw, shape=circle] (d13) at  (8,3) {2};
\node [draw, shape=circle] (d23) at  (6,3) {1};
\node [draw, shape=circle] (d33) at  (6,1) {1};
\node [draw, shape=circle] (d43) at  (8,1) {0};
\node [draw, shape=circle] (e13) at  (12,3) {0};
\node [draw, shape=circle] (e23) at  (10,3) {2};
\node [draw, shape=circle] (e33) at  (10,1) {1};
\node [draw, shape=circle] (e43) at  (12,1) {1};

\node [draw, shape=circle] (b14) at  (20,7) {1};
\node [draw, shape=circle] (b24) at  (18,7) {1};
\node [draw, shape=circle] (b34) at  (18,5) {0};
\node [draw, shape=circle] (b44) at  (20,5) {1};
\node [draw, shape=circle] (c14) at  (16,7) {1};
\node [draw, shape=circle] (c24) at  (14,7) {1};
\node [draw, shape=circle] (c34) at  (14,5) {1};
\node [draw, shape=circle] (c44) at  (16,5) {0};
\node [draw, shape=circle] (d14) at  (16,3) {2};
\node [draw, shape=circle] (d24) at  (14,3) {1};
\node [draw, shape=circle] (d34) at  (14,1) {1};
\node [draw, shape=circle] (d44) at  (16,1) {0};
\node [draw, shape=circle] (e14) at  (20,3) {0};
\node [draw, shape=circle] (e24) at  (18,3) {2};
\node [draw, shape=circle] (e34) at  (18,1) {1};
\node [draw, shape=circle] (e44) at  (20,1) {1};

\node [draw, shape=circle] (f1) at  (25,-7) {1};
\node [draw, shape=circle] (f2) at  (27,-7) {1};
\node [draw, shape=circle] (f3) at  (25,-5) {1};
\node [draw, shape=circle] (f4) at  (27,-5) {2};
\node [draw, shape=circle] (f5) at  (26,-3.8) {0};

\node [draw, shape=circle] (g1) at  (32.5,-7) {1};
\node [draw, shape=circle] (g2) at  (34.5,-7) {1};
\node [draw, shape=circle] (g3) at  (32.5,-5) {0};
\node [draw, shape=circle] (g4) at  (34.5,-5) {1};
\node [draw, shape=circle] (g5) at  (33.5,-3.8) {2};

\node [draw, shape=circle] (h1) at  (24,-1) {1};
\node [draw, shape=circle] (h2) at  (26,-1) {1};
\node [draw, shape=circle] (h3) at  (24,1) {1};
\node [draw, shape=circle] (h4) at  (26,1) {0};
\node [draw, shape=circle] (h5) at  (25,2.2) {1};

\node [draw, shape=circle] (i1) at  (33.5,-1) {0};
\node [draw, shape=circle] (i2) at  (35.5,-1) {1};
\node [draw, shape=circle] (i3) at  (33.5,1) {2};
\node [draw, shape=circle] (i4) at  (35.5,1) {1};
\node [draw, shape=circle] (i5) at  (34.5,2.2) {1};

\node [draw, shape=circle] (j1) at  (28.75,3) {1};
\node [draw, shape=circle] (j2) at  (30.75,3) {0};
\node [draw, shape=circle] (j3) at  (28.75,5) {1};
\node [draw, shape=circle] (j4) at  (30.75,5) {1};
\node [draw, shape=circle] (j5) at  (29.75,6.2) {1};

\draw(f1)--(f2)--(f3)--(f4)--(f5)--(f1)--(f3)--(f5)--(f2)--(f4)--(f1);
\draw(g1)--(g2)--(g3)--(g4)--(g5)--(g1)--(g3)--(g5)--(g2)--(g4)--(g1);
\draw(h1)--(h2)--(h3)--(h4)--(h5)--(h1)--(h3)--(h5)--(h2)--(h4)--(h1);
\draw(i1)--(i2)--(i3)--(i4)--(i5)--(i1)--(i3)--(i5)--(i2)--(i4)--(i1);
\draw(j1)--(j2)--(j3)--(j4)--(j5)--(j1)--(j3)--(j5)--(j2)--(j4)--(j1);

\draw(f2)--(g1);
\draw(f4)--(i1);
\draw(f3)--(h1);
\draw(f5)--(j1);
\draw(g3)--(h2);
\draw(g5)--(j2);
\draw(g4)--(i2);
\draw(h5)--(j3);
\draw(h4)--(i3);
\draw(i5)--(j4);

\draw(b1)--(b2)--(b3)--(b4)--(b1)--(b3);
\draw(b2)--(b4);
\draw(c1)--(c2)--(c3)--(c4)--(c1)--(c3);
\draw(c2)--(c4);
\draw(d1)--(d2)--(d3)--(d4)--(d1)--(d3);
\draw(d2)--(d4);
\draw(e1)--(e2)--(e3)--(e4)--(e1)--(e3);
\draw(e2)--(e4);
\draw(b2)--(c1);
\draw(c4)--(d3);
\draw(d1)--(e2);
\draw(e4)--(b4);
\draw(b3)--(d4);
\draw(c3)--(e3);

\draw(b11)--(b21)--(b31)--(b41)--(b11)--(b31);
\draw(b21)--(b41);
\draw(c11)--(c21)--(c31)--(c41)--(c11)--(c31);
\draw(c21)--(c41);
\draw(d11)--(d21)--(d31)--(d41)--(d11)--(d31);
\draw(d21)--(d41);
\draw(e11)--(e21)--(e31)--(e41)--(e11)--(e31);
\draw(e21)--(e41);
\draw(b21)--(c11);
\draw(c31)--(d21);
\draw(d41)--(e31);
\draw(e11)--(b41);
\draw(b31)--(d11);
\draw(c41)--(e21);

\draw(b12)--(b22)--(b32)--(b42)--(b12)--(b32);
\draw(b22)--(b42);
\draw(c12)--(c22)--(c32)--(c42)--(c12)--(c32);
\draw(c22)--(c42);
\draw(d12)--(d22)--(d32)--(d42)--(d12)--(d32);
\draw(d22)--(d42);
\draw(e12)--(e22)--(e32)--(e42)--(e12)--(e32);
\draw(e22)--(e42);
\draw(b22)--(c12);
\draw(c32)--(d22);
\draw(d42)--(e32);
\draw(e12)--(b42);
\draw(b32)--(d12);
\draw(c42)--(e22);

\draw(b13)--(b23)--(b33)--(b43)--(b13)--(b33);
\draw(b23)--(b43);
\draw(c13)--(c23)--(c33)--(c43)--(c13)--(c33);
\draw(c23)--(c43);
\draw(d13)--(d23)--(d33)--(d43)--(d13)--(d33);
\draw(d23)--(d43);
\draw(e13)--(e23)--(e33)--(e43)--(e13)--(e33);
\draw(e23)--(e43);
\draw(b23)--(c13);
\draw(c33)--(d23);
\draw(d43)--(e33);
\draw(e13)--(b43);
\draw(b33)--(d13);
\draw(c43)--(e23);

\draw(b14)--(b24)--(b34)--(b44)--(b14)--(b34);
\draw(b24)--(b44);
\draw(c14)--(c24)--(c34)--(c44)--(c14)--(c34);
\draw(c24)--(c44);
\draw(d14)--(d24)--(d34)--(d44)--(d14)--(d34);
\draw(d24)--(d44);
\draw(e14)--(e24)--(e34)--(e44)--(e14)--(e34);
\draw(e24)--(e44);
\draw(b24)--(c14);
\draw(c34)--(d24);
\draw(d44)--(e34);
\draw(e14)--(b44);
\draw(b34)--(d14);
\draw(c44)--(e24);

\draw(c21)--(b12);
\draw(d32)--(d33);
\draw(b13)--(c24);
\draw(e44)--(e41);
\draw(d31)--(e43);
\draw(e42)--(d34);


\end{tikzpicture}
\caption{The graphs $S_4^2$, $S_4^3$ and $S_5^2$ (from left to right), and a labeling for an OITRDF of minimum weight in each graph.}\label{fig:Sierpinskis}
\end{figure}

\begin{theorem}
For any Sierpi\'{n}ski graph $S_p^n$, $$\gamma_{oitR}(S_p^n)=p\left\lceil\frac{p^{n-1}}{2}\right\rceil+(p-1)\left\lfloor\frac{p^{n-1}}{2}\right\rfloor.$$
\end{theorem}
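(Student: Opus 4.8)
The plan is to prove the two matching bounds separately, viewing $S_p^n$ through its $p^{n-1}$ ``bottom'' cliques: for each fixed prefix $(a_1,\dots,a_{n-1})$ the $p$ vertices $(a_1,\dots,a_{n-1},c)$, $c\in\{0,\dots,p-1\}$, induce a copy of $K_p$, and contracting each such clique returns exactly $S_p^{n-1}$. Two structural facts drive everything: every bottom clique is complete, and every vertex of $S_p^n$ has at most one neighbour outside its own bottom clique (the only vertices with no such ``external'' neighbour are the $p$ extreme vertices $(i,\dots,i)$, of degree $p-1$; all others have degree $p$). For the lower bound, let $f(V_0,V_1,V_2)$ be any OITRDF. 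Since $V_0$ is independent and each bottom clique is complete, each of the $p^{n-1}$ cliques meets $V_0$ in at most one vertex, so $|V_0|\le p^{n-1}$. Moreover a vertex $u\in V_2$ has at most one $V_0$-neighbour inside its clique and at most one outside it, so $|N(u)\cap V_0|\le 2$; as every vertex of $V_0$ has a neighbour in $V_2$, summing yields $|V_0|\le\sum_{u\in V_2}|N(u)\cap V_0|\le 2|V_2|$. Using $|V_0|+|V_1|+|V_2|=p^n$ I would then write
$$\omega(f)=|V_1|+2|V_2|=p^n-|V_0|+|V_2|\ge p^n-|V_0|+\frac{|V_0|}{2}=p^n-\frac{|V_0|}{2}\ge p^n-\frac{p^{n-1}}{2},$$
and, since $\omega(f)$ is an integer, this rounds up to $p^n-\left\lfloor\frac{p^{n-1}}{2}\right\rfloor=p\left\lceil\frac{p^{n-1}}{2}\right\rceil+(p-1)\left\lfloor\frac{p^{n-1}}{2}\right\rfloor$. (Notice this part uses neither totality nor anything beyond the two structural facts.)

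For the upper bound I would exhibit a labelling attaining equality; equality forces exactly one $0$ per bottom clique and $\left\lceil\frac{p^{n-1}}{2}\right\rceil$ twos, each serving two zeros. The scheme is to pair up the bottom cliques along connecting edges --- a near-perfect matching of the reduced graph $S_p^{n-1}$ --- and, in each pair joined by an edge $uu'$, make one clique \emph{hosting} by setting $u=2$ (so $u$ witnesses the zero of its own clique internally and the zero $u'$ of the partner externally), make the partner \emph{receiving} by placing its unique zero at $u'$, and fill all remaining vertices with $1$. A hosting clique then has weight $p$ and a receiving one weight $p-1$, i.e.\ $2p-1$ per pair; when $p^{n-1}$ is odd the single leftover clique is labelled with one $2$ and one $0$ (weight $p$), and a short computation gives total weight exactly $p^n-\left\lfloor\frac{p^{n-1}}{2}\right\rfloor$.

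I would carry out this construction by induction on $n$, using the recursive description of $S_p^n$ as $p$ copies of $S_p^{n-1}$ glued along extreme vertices, with base case $S_p^1=K_p$ (where $\gamma_{oitR}(K_p)=p$, realised by one $2$, one $0$ and $p-2$ ones). When $p$ is even, $g(n):=p^n-\frac{p^{n-1}}{2}$ satisfies $g(n+1)=p\,g(n)$, so copying an optimal labelling into each of the $p$ copies has the right weight; carrying the invariant ``at most one extreme vertex is labelled $0$'' guarantees the connecting edges (which join extreme vertices) never create a $V_0$--$V_0$ conflict, and the invariant survives because the extreme vertices of $S_p^{n+1}$ inherit exactly the extreme labels of the copied function. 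When $p$ is odd one has $g(n+1)=p\,g(n)-\frac{p-1}{2}$, so after copying one must additionally save $\frac{p-1}{2}$ units by relabelling $\frac{p-1}{2}$ well-chosen extreme vertices from $1$ to $0$, each along a connecting edge whose far endpoint carries label $2$.

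The main obstacle is precisely this coordination in the upper bound: one must simultaneously keep $V_0$ independent, ensure each chosen $2$ genuinely witnesses two distinct zeros through the matching of $S_p^{n-1}$, maintain that $V_1\cup V_2$ has no isolated vertex (the totality condition), and, in the odd case, realise the weight saving --- all at once. The clean point that makes this feasible is that the connecting edges form a matching on $V(S_p^n)$ and every clique has size $p\ge 3$, leaving enough freedom (an independent-transversal type argument) to place the ``internal'' zeros of the hosting and leftover cliques without clashes. Making this bookkeeping precise, and tracking the correct strengthened invariant through the induction (especially for odd $p$), is where the real work lies.
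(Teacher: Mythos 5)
Your lower bound is complete and correct, and it is cleaner than the paper's own argument: where the paper counts the copies of $K_p$ that contain a vertex labelled $2$ and asserts there are at least $\left\lceil p^{n-1}/2\right\rceil$ of them (an awkward claim for an arbitrary OITRDF), you combine $|V_0|\le p^{n-1}$, $|V_0|\le 2|V_2|$ and $\omega(f)=p^n-|V_0|+|V_2|$, which is exactly the mechanism the paper itself uses for circulant graphs and for $K_r\square K_s$, with no delicate intermediate step.

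The genuine gap is in the upper bound: no labelling is ever fully constructed, and the inductive route you propose does not work as described when $p$ is odd. You need $(p-1)/2$ connecting edges of $S_p^{n+1}$ whose far endpoints carry label $2$; those far endpoints are extreme vertices of the copies, and a short count shows that an \emph{optimal} labelling of $S_p^n$ has at most one extreme vertex labelled $2$. Indeed, let $e_2$ be the number of extreme vertices in $V_2$; each such vertex has no external neighbour, so it dominates at most one zero, giving $|V_0|\le 2|V_2|-e_2$, while optimality forces $|V_0|-|V_2|=\left\lfloor p^{n-1}/2\right\rfloor$; combining yields $|V_0|\ge p^{n-1}-1+e_2$, hence $e_2\le 1$ because $|V_0|\le p^{n-1}$. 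So each copy can certify at most one relabelling, and its unique $2$-corner must be rotated (via the corner-permuting automorphisms) to face its partner. Worse, a vertex relabelled from $1$ to $0$ must lie in a zero-free bottom clique of its copy (all its clique-mates are its neighbours), whereas any optimal labelling with $e_2=1$ has $|V_0|=p^{n-1}$, i.e.\ a zero in \emph{every} clique. Your base case shows the problem concretely: building $S_p^2$ from copies of $K_p$ labelled with one $2$, one $0$ and $p-2$ ones, no vertex of any copy can be relabelled to $0$ at all, since each copy is a single clique already containing a zero. Repairing the induction requires carrying two coordinated species of optimal labellings (certifying copies with one $2$-corner and a zero in every clique; receiving copies with no $2$-corner and a zero-free clique at a prescribed corner, e.g.\ the all-ones labelling of $K_p$ in the base case) --- a far stronger invariant than ``at most one extreme vertex is labelled $0$'', and precisely the bookkeeping you defer.

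By contrast, the direct matching construction you sketch first can be completed and is the right route. Receiving zeros can never clash (their unique external neighbour is the partner's $2$); totality is automatic because every clique keeps $p-1\ge 2$ positive, mutually adjacent vertices; the only remaining task is to choose one zero in each hosting and leftover clique, from at least $p-1\ge 2$ candidates, so that no two chosen zeros are joined by a connecting edge. Since every candidate has at most one external neighbour, the conflict graph among candidates has maximum degree one, and an independent transversal exists by the easiest case of Haxell's theorem (parts of size at least $2\Delta$), or by an elementary greedy/orientation argument. Had you executed this step, your proof would be complete --- and genuinely different from the paper's, which copies a fixed labelling of $S_p^2$ level by level; your own identity $g(n+1)=p\,g(n)-\frac{p-1}{2}$ for odd $p$ pinpoints exactly why such plain copying overshoots the optimum by $(p-1)/2$ at each level.
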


\begin{proof}
We first note that in each subgraph of $S_p^n$ isomorphic to $K_p$, only one vertex can belong to $V_0$ for any $\gamma_{oitR}(S_p^n)$-function $f(V_0,V_1,V_2)$. Moreover, each of these vertices in $V_0$ needs to have a neighbor in $V_2$. Since every vertex of each copy of $K_p$ in $S_p^n$ has at most one neighbor not in the same copy of $K_p$, this means that a vertex of $V_2$ can have at most two neighbors in $V_0$, one from the same copy it belongs, one from another copy. If there is a vertex labeled 2 under $f$ in a copy $Q$ of $K_p$ in $S_p^n$, then $f(V(Q))\ge p$. On the contrary, if there is no vertex labeled 2 under $f$ in a copy $Q'$ of $K_p$ in $S_p^n$, then $f(V(Q'))\ge p-1$. Note that $S_p^n$ contains $p^{n-1}$ disjoint subgraphs isomorphic to $K_p$ (the copies of $K_p$). Hence, since there could be at most one vertex in each copy of $K_p$ labeled 0 under $f$ and each vertex in $V_2$ can have at most two neighbors in $V_0$, we deduce that there at least $\left\lceil\frac{p^{n-1}}{2}\right\rceil$ copies of $K_p$ containing a vertex labeled 2 under $f$. Assume $Q_1,\dots,Q_{r}$, with $r\ge \left\lceil\frac{p^{n-1}}{2}\right\rceil$, are the copies of $K_p$ containing a vertex with label 2. This leaves $p^{n-1}-r$ copies of $K_p$ to have no vertex with label 2, denoted by $Q'_1,\dots,Q'_{p^{n-1}-r}$. This leads to the following.
$$\gamma_{oitR}(S_p^n)=\omega(f)=\sum_{i=1}^{r}f(V(Q_i))+\sum_{i=1}^{p^{n-1}-r}f(V(Q'_i))\ge pr+(p-1)(p^{n-1}-r)=r+(p-1)p^{n-1}.$$
Since $r\ge \left\lceil\frac{p^{n-1}}{2}\right\rceil$, we obtain that
$$\gamma_{oitR}(S_p^n)\ge (p-1)p^{n-1}+\left\lceil\frac{p^{n-1}}{2}\right\rceil=p\left\lceil\frac{p^{n-1}}{2}\right\rceil+(p-1)\left\lfloor\frac{p^{n-1}}{2}\right\rfloor.$$
On the other hand, we shall construct a $\gamma_{oitR}(S_p^n)$-function of weight $p\left\lceil\frac{p^{n-1}}{2}\right\rceil+(p-1)\left\lfloor\frac{p^{n-1}}{2}\right\rfloor$ in the following way. We will proceed iteratively. That is, we begin with a function $f_2$ for the graph $S_p^2$ with a specific structure which we require for our purposes. One possible labeling, for a function $f_2$ for the cases $p=4$ and $p=5$, is shown in Figure \ref{fig:Sierpinskis}. There are two requirements that we need for our function:
\begin{itemize}
  \item[{\rm (i)}] There are exactly $p$ vertices labeled with 0, and they are lying in ``interior'' positions of the drawing of $S_p^2$. Notice that this latter requirement can be always done unless $p=4$.
  \item[{\rm (ii)}] There are $\left\lceil\frac{p^{n-1}}{2}\right\rceil$ vertices labeled with 2, and they have at least one and at most two neighbors labeled with 0 that belong to two different copies of $K_p$. If $p$ is even, each vertex labeled 2 has exactly two neighbors labeled 0. This could happen also is $p$ is odd, but not necessarily.
\end{itemize}
Notice that $f_2$ has weight $p\left\lceil\frac{p^{n-1}}{2}\right\rceil+(p-1)\left\lfloor\frac{p^{n-1}}{2}\right\rfloor$, and so, it is a $\gamma_{oitR}(S_p^n)$-function.

Now, for the graph $S_p^3$, we need $p$ copies of the graph $S_p^2$. Then, we use in each copy, the labeling used in the graph $S_p^2$. Since the vertices labeled with 0 are ``interiors'' in their corresponding copies, they will not have a neighbor in another copy, and clearly they remain having a neighbor labeled 2. Thus, this labeling is taken as a function $f_3$ which is indeed a $\gamma_{oitR}(S_p^n)$-function of weight $p\left\lceil\frac{p^{n-1}}{2}\right\rceil+(p-1)\left\lfloor\frac{p^{n-1}}{2}\right\rfloor$. We note that, if $p=4$, then, although the vertices labeled 0 are not ``interior'', the property for a vertex in a copy of $S_4^2$ of not having a neighbor in another copy remains. Thus, the same conclusion can be deduced for $S_4^3$. A repetition of this process will always produce a $\gamma_{oitR}(S_p^n)$-function of weight $p\left\lceil\frac{p^{n-1}}{2}\right\rceil+(p-1)\left\lfloor\frac{p^{n-1}}{2}\right\rfloor$ for any values of $p$ and $n$. This completes the proof.
\end{proof}

\subsection{Circulant graphs}

Let $\mathbb{Z}_n$ be the additive group of integers modulo $n$ and let $M\subset \mathbb{Z}_n$, such that, $i\in M$ if and only if $-i\in M$. A circulant graph $G$ of order $n$ with respect to $M$ is a graph $G$ constructed as follows. \textcolor[rgb]{0.00,0.00,0.00}{The vertices of $G$} are the elements of $\mathbb{Z}_n$ and $(i,j)$ is an edge of $G$ if and only if $j-i\in M$. With such notation above, we see that a cycle is precisely a circulant graph when $M=\{-1,1\}$, while a complete graph is also a circulant graph with $M=\mathbb{Z}_n$. Now, in order to simplify the notation, we shall use $C(n,k)$, $0<k\leq\left\lfloor\frac{n}{2}\right\rfloor$, instead of $CR(n,\{-k,-k+1,...,-1,1,2,...,k\})$. Moreover, we shall next assume that $V(C(n,k))=\textcolor[rgb]{0.00,0.00,0.00}{\{v_0,v_1, \ldots, v_{n-1}\}}$, such that $v_i$ is adjacent to $v_{i+j}$ and $v_{i-j}$ with \textcolor[rgb]{0.00,0.00,0.00}{$j=1, \ldots, k$}, where the subscripts are taken modulo $n$. In order to compute $\gamma_{oitR}(C(n,k))$, we shall need the following result (which could be a known one, but we know no reference in which this appears).

\begin{lemma}\label{lem:indep-circ}
For any integers $n$ and $2\leq k\leq \lfloor n/2 \rfloor$, $\beta(C(n,k))=\left\lfloor\frac{n}{k+1}\right\rfloor$.
\end{lemma}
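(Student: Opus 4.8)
The plan is to recast the statement as a spacing problem on the cycle of $n$ positions. First I would observe that in $C(n,k)$ two vertices $v_a$ and $v_b$ are adjacent precisely when their cyclic distance $d(a,b)=\min\{(a-b)\bmod n,\,(b-a)\bmod n\}$ satisfies $d(a,b)\le k$; consequently a set $I\subseteq V(C(n,k))$ is independent if and only if any two of its vertices lie at cyclic distance at least $k+1$. Under this reformulation the lemma is equivalent to saying that the maximum number of points that can be placed among $n$ cyclically arranged positions with pairwise cyclic distance at least $k+1$ is exactly $\lfloor n/(k+1)\rfloor$. I would then prove the two inequalities separately.

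For the upper bound $\beta(C(n,k))\le\lfloor n/(k+1)\rfloor$, I would take an arbitrary independent set $I=\{v_{a_1},\dots,v_{a_m}\}$ with $0\le a_1<\cdots<a_m\le n-1$ and consider its $m$ consecutive forward gaps $g_i=a_{i+1}-a_i$ (for $1\le i\le m-1$) together with the wrap-around gap $g_m=n-a_m+a_1$, so that $\sum_{i=1}^{m}g_i=n$. The decisive step is to show that every gap satisfies $g_i\ge k+1$: if some gap were at most $k$, then, using the hypothesis $k\le\lfloor n/2\rfloor$ to guarantee $g_i\le n-g_i$, the two corresponding consecutive vertices would be at cyclic distance $g_i\le k$, contradicting independence. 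Summing the $m$ gaps then yields $m(k+1)\le n$, and since $m$ is an integer we obtain $m\le\lfloor n/(k+1)\rfloor$.

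For the matching lower bound I would construct an explicit independent set of that size. Writing $m=\lfloor n/(k+1)\rfloor$, I take $I=\{v_{0},v_{k+1},v_{2(k+1)},\dots,v_{(m-1)(k+1)}\}$, whose consecutive forward gaps are all equal to $k+1$. The only point requiring a little care — and the step I expect to be the main (though modest) obstacle — is the wrap-around gap $n-(m-1)(k+1)$: from $m(k+1)\le n$ it follows that $n-(m-1)(k+1)\ge k+1$, so this last gap is also at least $k+1$. Since the two arcs joining any two chosen vertices each consist of one or more full gaps, and every gap is at least $k+1$, all pairwise cyclic distances are at least $k+1$; hence $I$ is an independent set of cardinality $m$, giving $\beta(C(n,k))\ge\lfloor n/(k+1)\rfloor$. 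Combining the two bounds establishes the desired equality.
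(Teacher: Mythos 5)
Your proposal is correct, and it splits into the same two halves as the paper's proof, but the upper bound is argued by a genuinely different route. For $\beta(C(n,k))\le\lfloor n/(k+1)\rfloor$ the paper uses a sliding-window double count: it considers the $n$ sets $A_i=\{v_i,v_{i+1},\dots,v_{i+k}\}$, notes that an independent set meets each $A_i$ in at most one vertex while each vertex lies in exactly $k+1$ of the $A_i$, and averages. You instead order the independent set around the cycle, decompose $n$ as the sum of the $m$ consecutive gaps, and show every gap is at least $k+1$; summing gives $m(k+1)\le n$. The averaging argument is slicker in that it needs no ordering, no wrap-around gap, and no explicit appeal to $k\le\lfloor n/2\rfloor$ (adjacency inside each window holds by definition of $M$); your gap argument is more elementary, makes the extremal configuration (equally spaced vertices) visible, and is where you correctly isolate the one place the hypothesis $k\le\lfloor n/2\rfloor$ is genuinely needed, namely to convert a small forward gap into a small cyclic distance. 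On the lower bound your construction is the same arithmetic-progression set as the paper's, but your version is actually more careful: the paper takes all $v_i$ with $i\equiv 0\pmod{k+1}$, which when $(k+1)\nmid n$ contains $\lfloor n/(k+1)\rfloor+1$ vertices and fails to be independent because of the short wrap-around gap (e.g.\ $n=7$, $k=2$ gives $\{v_0,v_3,v_6\}$ with $v_6$ adjacent to $v_0$); your truncation to the first $\lfloor n/(k+1)\rfloor$ terms together with the explicit check $n-(m-1)(k+1)\ge k+1$ repairs exactly this imprecision.
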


\begin{proof}
Let $S=\{v_i\,:\,i\equiv 0\,\mbox{(\textcolor[rgb]{0.00,0.00,0.00}{k+1})}\}\subset V(C(n,k))$. Since $v_i$ is not adjacent to every vertex $v_j$ such that $i\equiv j (\textcolor[rgb]{0.00,0.00,0.00}{k+1})$, it is clear that $S$ is an independent set of $C(n,k)$, and so, $\beta(C(n,k))\ge |S|=\left\lfloor\frac{n}{k+1}\right\rfloor$.

On the other hand, let $S'$ be a $\beta(C(n,k))$-set. For every $i\in \{0,\dots,n-1\}$, we shall consider the set $A_i=\{v_i,v_{i+1},\dots,v_{i+k}\}$. Notice that $|S'\cap A_i|\le 1$ for every $i\in \{0,\dots,n-1\}$. Thus, we have the following.
$$\beta(C(n,k))=\frac{1}{k+1}\sum_{i=0}^{n-1}|S'\cap A_i|\le \frac{n}{k+1}$$
Since $\beta(C(n,k))$ is an integer, it must happen $\beta(C(n,k))\le \left\lfloor\frac{n}{k+1}\right\rfloor$, which completes the proof.
\end{proof}

\begin{theorem}\label{th:circulant}
For any integers $n$ and $2\leq k\leq \lfloor n/2 \rfloor$,
$$\gamma_{oitR}(C(n,k))=n-\left\lfloor\frac{\left\lfloor \frac{n}{k+1} \right\rfloor}{2}\right\rfloor.$$
\end{theorem}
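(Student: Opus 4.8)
The plan is to prove both inequalities separately, writing $\beta=\beta(C(n,k))=\lfloor n/(k+1)\rfloor$ (the value supplied by Lemma~\ref{lem:indep-circ}) and noting that the claimed quantity equals $n-\lfloor\beta/2\rfloor$. First I would dispose of the degenerate case $2k=n$: here $C(n,k)\cong K_n$, so $\beta=1$ and the formula reads $n=\gamma_{oitR}(K_n)$, which is already known. From now on I assume $2k<n$, in which case the neighborhood of any vertex $v_i$ splits into the two arcs $\{v_{i-k},\dots,v_{i-1}\}$ and $\{v_{i+1},\dots,v_{i+k}\}$, each consisting of $k$ consecutive vertices.

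For the lower bound, let $f(V_0,V_1,V_2)$ be a $\gamma_{oitR}(C(n,k))$-function. Since $V_0$ is independent, $V_1\cup V_2$ is a vertex cover, so $|V_1|+|V_2|=n-|V_0|$ and therefore $\omega(f)=(n-|V_0|)+|V_2|$. The crucial observation is that each vertex of $V_2$ has at most two neighbors in $V_0$: its neighbors lie in the two arcs described above, and inside any block of $k$ consecutive vertices an independent set can contain at most one vertex (two vertices at index-distance at most $k-1$ are adjacent). Double counting the $V_0$--$V_2$ adjacencies then yields $|V_0|\le 2|V_2|$, i.e. $|V_2|\ge\lceil|V_0|/2\rceil$. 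Combining this with $|V_0|\le\beta$ (independence again) gives $\omega(f)\ge(n-|V_0|)+\lceil|V_0|/2\rceil=n-\lfloor|V_0|/2\rfloor\ge n-\lfloor\beta/2\rfloor$.

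For the matching upper bound I would exhibit an OITRDF of weight $n-\lfloor\beta/2\rfloor$. Take $V_0=S=\{v_i:\ i\equiv 0\ (\mathrm{mod}\ k+1)\}$, the independent set of size $\beta$ from Lemma~\ref{lem:indep-circ}, and write its elements in order as $w_0,\dots,w_{\beta-1}$ with $w_j=v_{j(k+1)}$. For each pair $(w_{2t},w_{2t+1})$ place a single vertex of $V_2$ at position $2t(k+1)+1$: it is adjacent to $w_{2t}$ (index-distance $1$) and to $w_{2t+1}$ (index-distance $k$), so it covers both. If $\beta$ is odd, add one further $V_2$-vertex adjacent to the leftover $w_{\beta-1}$; in all cases $|V_2|=\lceil\beta/2\rceil$. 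Putting every remaining vertex in $V_1$, so that $V_1\cup V_2=V(C(n,k))\setminus S$, the weight is $(n-\beta-\lceil\beta/2\rceil)+2\lceil\beta/2\rceil=n-\lfloor\beta/2\rfloor$.

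It then remains to check the three defining conditions, and this verification is where I expect the only real care to be needed. Independence of $V_0=S$ and the fact that every vertex of $V_0$ has a neighbor in $V_2$ are immediate from the construction. The genuine point is that $V_1\cup V_2$ must induce a subgraph without isolated vertices: for any $v_i\notin S$, its $2k$ neighbors lie in a cyclic window of $2k+1$ consecutive vertices, which contains at most two elements of $S$ (consecutive elements of $S$ are spaced at least $k+1$ apart, even across the seam); since $k\ge 2$, at least $2k-2\ge 2$ of its neighbors lie outside $S$, so $v_i$ is non-isolated in $V_1\cup V_2$. The main obstacle is thus organizing the pairing and placement of the $V_2$-vertices so that it respects the cyclic wrap-around and the parity of $\beta$ while keeping the weight count exact; once this bookkeeping is set up correctly, the two bounds coincide and the theorem follows.
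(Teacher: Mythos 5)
Your proof is correct, and its skeleton is the same as the paper's: a double-counting lower bound ($\omega(f)=n-|V_0|+|V_2|$, then $|V_2|\ge\lceil|V_0|/2\rceil$ and $|V_0|\le\beta(C(n,k))=\lfloor n/(k+1)\rfloor$ by Lemma~\ref{lem:indep-circ}) together with an explicit function attaining the bound. You are, however, more careful than the paper at exactly the points where care is needed, and your upper-bound construction is genuinely different from --- and in fact corrects --- the one printed there. The paper takes $V_0=\bigcup_i\{v_{i(k+1)}\}$ and $V_2=\bigcup_i\{v_{i(k+1)+1}\}$, i.e.\ one label-$2$ vertex for each label-$0$ vertex; since then $|V_2|=|V_0|$, that function has weight $n-|V_0|+|V_2|=n$, not the claimed $n-\lfloor\beta/2\rfloor$, so the construction as written does not establish the upper bound. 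Your pairing construction (a single label-$2$ vertex $v_{2t(k+1)+1}$ serving both $w_{2t}$, at index-distance $1$, and $w_{2t+1}$, at index-distance $k$, giving $|V_2|=\lceil\beta/2\rceil$ and weight exactly $n-\lfloor\beta/2\rfloor$) is what is actually required. Likewise, you justify the key inequality $|V_0|\le 2|V_2|$ (each of the two arcs forming a neighborhood is a clique, hence meets the independent set $V_0$ at most once), which the paper merely asserts with a ``Notice that''; you verify the no-isolated-vertex condition explicitly; and you isolate the degenerate case $2k=n$, where the two-arc picture breaks down and $C(n,k)\cong K_n$. Two small points to tighten: describe $V_0$ as $\{v_{j(k+1)}:0\le j\le\beta-1\}$ rather than $\{v_i:\ i\equiv 0\pmod{k+1}\}$, because when $(k+1)\nmid n$ the full congruence class has $\beta+1$ elements and is not even independent (in $C(10,3)$ it is $\{v_0,v_4,v_8\}$, and $v_8$ is adjacent to $v_0$) --- your enumeration $w_0,\dots,w_{\beta-1}$ shows you intend the truncated set, but this should be said; and for odd $\beta$ you should name the extra label-$2$ vertex, e.g.\ $v_{(\beta-1)(k+1)+1}$, so that the weight count and the adjacency to $w_{\beta-1}$ are visibly exact.
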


\begin{proof}
Let $f(V_0,V_1,V_2)$ be a function on $C(n,k)$ defined as follows

 $$V_0=\bigcup_{i=0}^{\lfloor n/(k+1)\rfloor}\{v_{i(k+1)}\}, \hspace{.4cm} V_2=\bigcup_{i=0}^{\lfloor n/(k+1)\rfloor}\{v_{i(k+1)+1}\} \hspace{.3cm} \text{ and } \hspace{.3cm} V_1=V(C(n,k))\setminus (V_1\cup V_2).$$ Notice that $f$ is an OITRDF on $C(n,k)$. Hence, $\gamma_{oitR}(C(n,k))\leq \omega(f)=|V_1|+2|V_2|=n-\left\lfloor\frac{\left\lfloor \frac{n}{k+1} \right\rfloor}{2}\right\rfloor.$

In consequence, we only need to prove that $\gamma_{oitR}(C(n,k))\geq n-\left\lfloor\frac{\left\lfloor \frac{n}{k+1} \right\rfloor}{2}\right\rfloor.$ Let $f(V_0,V_1,V_2)$ be a $\gamma_{oitR}(C(n,k))$-function. Notice that $|V_0|\leq 2|V_2|$ which leads to $\left\lceil\frac{|V_0|}{2}\right\rceil\le |V_2|$. We consider the following.
\begin{align*}
\label{circ-chain}
  \gamma_{oitR}(C(n,k))=\omega(f) & =2|V_2|+|V_1| \\
   & = n -|V_0|+|V_2|  \\
   & \ge n-|V_0|+\left\lceil\frac{|V_0|}{2}\right\rceil \\
   & =n-\left\lfloor\frac{|V_0|}{2}\right\rfloor.
\end{align*}
Since $|V_0|\le \beta(C(n,k))=\left\lfloor \frac{n}{k+1} \right\rfloor$ (the last part by Lemma \ref{lem:indep-circ}), we deduce that $\gamma_{oitR}(C(n,k))\ge n-\left\lfloor\frac{\left\lfloor \frac{n}{k+1} \right\rfloor}{2}\right\rfloor$, which completes the proof.
\end{proof}

\subsection{Products of complete graphs}

Interest in studies on products of complete graphs might has come from the well known Hamming graphs $H(d,q)$, which are indeed the Cartesian product of $d$ complete graphs $K_q$. For our exposition, we simply begin with studying the case in which $d=2$, but the research could be continued to larger values of $d$. In addition, we also study other products of complete graphs.

Let $V(K_r)=\{u_1,\dots,u_r\}$ and $V(K_s)=\{v_1,\dots,v_s\}$. We now consider the outer-independent total Roman domination number of the four standard products (Cartesian-$\square$, direct-$\times$, strong-$\boxtimes$ and lexicographic-$\circ$, according to \cite{Hammack2011}) of complete graphs. First it is clear that the strong and lexicographic product of complete graphs result in a complete graph as well. Thus, $\gamma_{oitR}(K_r\boxtimes K_s)=\gamma_{oitR}(K_r\circ K_s)=rs$. We next give also exact formulas for the remaining two standard products. For information, definitions and usual terminology on product graphs we suggest the very complete book \cite{Hammack2011}.

\begin{theorem}
For any integers $r,s$ with $2\le r\le s$, $\gamma_{oitR}(K_r\square K_s)=rs-\left\lfloor\frac{r}{2}\right\rfloor$.
\end{theorem}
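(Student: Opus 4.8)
The plan is to view $K_r\square K_s$ as the rook's graph on an $r\times s$ board: its vertices are the pairs $(u_i,v_j)$, every row $\{(u_i,v_1),\dots,(u_i,v_s)\}$ induces a copy of $K_s$, and every column $\{(u_1,v_j),\dots,(u_r,v_j)\}$ induces a copy of $K_r$. The first thing I would record is the elementary structural fact that an independent set in this graph is exactly a set of cells no two of which share a row or a column (a partial permutation matrix), so that $\beta(K_r\square K_s)=\min\{r,s\}=r$.

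For the lower bound, let $f(V_0,V_1,V_2)$ be a $\gamma_{oitR}(K_r\square K_s)$-function. Since $V_0$ is independent, $|V_0|\le\beta(K_r\square K_s)=r$. The key counting step is that a vertex of $V_2$ sitting in a cell $(u_a,v_b)$ is adjacent only to cells in row $a$ or column $b$, and $V_0$ contains at most one cell in row $a$ and at most one in column $b$; hence each vertex of $V_2$ is adjacent to at most two vertices of $V_0$. As every vertex of $V_0$ must have a neighbour in $V_2$, this yields $|V_0|\le 2|V_2|$, i.e.\ $|V_2|\ge\lceil |V_0|/2\rceil$. Writing $n=rs$ and using $|V_1|+|V_2|=n-|V_0|$, I then estimate
$$\omega(f)=|V_1|+2|V_2|=(n-|V_0|)+|V_2|\ge n-|V_0|+\left\lceil\frac{|V_0|}{2}\right\rceil=n-\left\lfloor\frac{|V_0|}{2}\right\rfloor\ge rs-\left\lfloor\frac{r}{2}\right\rfloor,$$
where the last inequality uses $|V_0|\le r$ together with monotonicity of $x\mapsto\lfloor x/2\rfloor$.

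For the matching upper bound I would exhibit an OITRDF attaining equality, so I need $|V_0|=r$ and $|V_2|=\lceil r/2\rceil$. Take the diagonal $V_0=\{(u_i,v_i):1\le i\le r\}$, which is independent and of maximum size $r$. Pair the diagonal cells as $\{(u_{2t-1},v_{2t-1}),(u_{2t},v_{2t})\}$ and for each pair place the single vertex $(u_{2t-1},v_{2t})$ into $V_2$; this vertex lies in row $2t-1$ and column $2t$, hence is adjacent to both members of its pair, so one $V_2$-vertex covers two $V_0$-vertices. If $r$ is odd, the leftover cell $(u_r,v_r)$ is assigned one additional $V_2$-neighbour, giving $|V_2|=\lceil r/2\rceil$ in all cases. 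Putting every remaining vertex in $V_1$, so that $V_1\cup V_2=V(K_r\square K_s)\setminus V_0$, the set $V_0$ is independent, every vertex of $V_0$ has a neighbour in $V_2$, and the weight is $|V_1|+2|V_2|=(rs-r)+\lceil r/2\rceil=rs-\lfloor r/2\rfloor$, as required.

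The step I expect to be the crux is verifying that $V_1\cup V_2$ has no isolated vertex, i.e.\ that it is a total dominating set. For a cell $w=(u_a,v_b)\notin V_0$, all of its $(r-1)+(s-1)=r+s-2$ neighbours lie in its row or column, and at most two of them (one per line) belong to $V_0$; hence $w$ retains at least $r+s-4$ neighbours inside $V_1\cup V_2$. Thus whenever $r+s\ge 5$ no vertex of $V\setminus V_0$ is isolated and the construction is valid, completing the proof in this generic range. The only configuration escaping this bound is $r=s=2$, where $K_2\square K_2=C_4$ must be analysed directly; I would treat this remaining small case separately so as to close the argument.
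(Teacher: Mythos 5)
Your proposal follows essentially the same route as the paper: the identical lower-bound count (each vertex of $V_2$ sees at most one vertex of $V_0$ in its row-clique and at most one in its column-clique, whence $|V_0|\le 2|V_2|$, combined with $|V_0|\le\beta(K_r\square K_s)=r$), and the same diagonal-plus-staircase construction for the upper bound (the paper places $(u_{2i},v_{2i-1})$ in $V_2$; yours is the transpose of this). Where you differ is in diligence: the paper dismisses the verification that $V_1\cup V_2$ has no isolated vertex with ``we readily see,'' whereas you actually carry it out, and this is exactly where something real surfaces.

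The case $r=s=2$ that you set aside cannot be closed, because the statement is false there. In $K_2\square K_2\cong C_4$ the formula predicts $\gamma_{oitR}=3$, but no OITRDF of weight $3$ exists: a weight-$3$ function on four vertices is either of type $(1,1,1,0)$, which has $V_2=\emptyset$ while $V_0\neq\emptyset$ (violating the condition that every vertex of $V_0$ has a neighbor in $V_2$), or of type $(2,1,0,0)$, in which $V_0$ must be a non-adjacent (diagonal) pair, forcing $V_1\cup V_2$ to be the other diagonal pair, which induces no edge, so both its vertices are isolated. Hence $\gamma_{oitR}(C_4)=4$, attained by the all-ones function. So your argument is in fact complete and correct for every pair with $r+s\ge 5$, i.e., for all $(r,s)\ne(2,2)$; the residual case is not a gap you failed to fill but a counterexample to the theorem as stated, one which the paper's own proof overlooks (for $r=s=2$ its claimed OITRDF leaves both vertices of $V_1\cup V_2$ isolated, and its lower bound of $3$ is simply not tight). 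The correct conclusion is that the formula holds whenever $s\ge 3$, while $\gamma_{oitR}(K_2\square K_2)=4$.
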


\begin{proof}
Let $f(V_0,V_1,V_2)$ be a function on $K_r\square K_s$ such that $V_0=\{(u_i,v_i)\,:\,1\le i\le r\}$, $V_2=\{(u_{2i},v_{2i-i})\,:\,1\le i\le \left\lceil r/2\right\rceil\}$ and $V_1=V(K_r\square K_s)\setminus (V_0\cup V_2)$. We readily see that $V_0$ is independent, that $V_1\cup V_2$ is a total dominating set, and that every vertex in $V_0$ is adjacent to a vertex in $V_2$. Thus, $f$ is an OITRDF on $K_r\square K_s$, and so, $\gamma_{oitR}(K_r\square K_s)\le \omega(f)=rs-r+\left\lceil\frac{r}{2}\right\rceil=rs-\left\lfloor\frac{r}{2}\right\rfloor$.

On the other hand, let $f'(V'_0,V'_1,V'_2)$ be a $\gamma_{oitR}(K_r\square K_s)$-function. From now, the proof follows along the lines of the second part of the proof of Theorem \ref{th:circulant}. That is, since similarly $\left\lceil\frac{|V'_0|}{2}\right\rceil\le |V'_2|$ and $|V'_0|\le \beta(K_r\square K_s)=r$, we deduce that $\gamma_{oitR}(K_r\square K_s)\ge rs-\left\lfloor\frac{r}{2}\right\rfloor$.
\end{proof}

\begin{theorem}
For any integers $r,s$ with $2\le r\le s$, $\gamma_{oitR}(K_r\times K_s)=s(r-1)+2$.
\end{theorem}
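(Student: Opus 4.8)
The plan is to follow the same two-sided strategy used for the circulant graphs and for $K_r\square K_s$: first pin down the independence and vertex cover numbers of $K_r\times K_s$, then build an explicit OITRDF of the claimed weight, and finally match it with a lower bound derived from the identity $\omega(f)=rs-|V_0|+|V_2|$.

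First I would compute $\beta(K_r\times K_s)$. Recall that $(u_i,v_j)$ and $(u_{i'},v_{j'})$ are adjacent in $K_r\times K_s$ exactly when $i\neq i'$ and $j\neq j'$. Hence a ``row'' $\{(u_a,v_j):1\le j\le s\}$ (with the first coordinate fixed) is independent, giving $\beta\ge s$. Conversely, if an independent set $I$ contains two vertices with distinct first coordinates $a\neq b$, then any $(u_a,v_j),(u_b,v_{j'})\in I$ must satisfy $j=j'$ (otherwise they would be adjacent); this forces each used row to contribute a single vertex, all lying in one common column, so $|I|\le r\le s$. Thus $\beta(K_r\times K_s)=s$, and by Gallai's theorem $\alpha(K_r\times K_s)=rs-s=s(r-1)$.

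For the upper bound I would exhibit a function with $V_0$ a full row, two vertices of weight $2$ in a second row, and everything else of weight $1$; concretely $V_0=\{(u_1,v_j):1\le j\le s\}$, $V_2=\{(u_2,v_1),(u_2,v_2)\}$, and $V_1=V(K_r\times K_s)\setminus(V_0\cup V_2)$. Condition (ii) is immediate since $V_0$ is a row. Condition (i) holds because $(u_2,v_1)$ dominates every $(u_1,v_j)$ with $j\neq1$ while $(u_2,v_2)$ dominates every $(u_1,v_j)$ with $j\neq2$, so together they cover $V_0$. The weight is then $|V_1|+2|V_2|=(rs-s-2)+4=s(r-1)+2$, matching the target. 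The main obstacle is condition (iii): one must check that $V_1\cup V_2$ induces no isolated vertex. Here $V_1\cup V_2$ is precisely the union of rows $2,\dots,r$, which induces a copy of $K_{r-1}\times K_s$ of minimum degree $(r-2)(s-1)$; this is positive exactly when $r\ge3$, so the degenerate case $r=2$ (where the complementary row is edgeless, and every candidate $V_2$ vertex becomes isolated) is delicate and would have to be treated on its own.

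For the lower bound I would argue as in the second half of the proof of Theorem \ref{th:circulant}. For any $\gamma_{oitR}(K_r\times K_s)$-function $f(V_0,V_1,V_2)$ one has $\omega(f)=2|V_2|+|V_1|=rs-|V_0|+|V_2|$, and $|V_0|\le\beta=s$. If $|V_0|\le s-2$ the bound $\omega(f)\ge rs-(s-2)=s(r-1)+2$ is immediate; if $|V_0|=s-1$ then $V_0\neq\emptyset$ forces $|V_2|\ge1$, giving the same bound. The crucial case is $|V_0|=s$: were $|V_2|=1$, say $V_2=\{w\}$, then condition (i) would give $V_0\subseteq N(w)$, but the subgraph induced by $N(w)$ is isomorphic to $K_{r-1}\times K_{s-1}$, whose independence number is $s-1<s$ by the computation above, a contradiction; hence $|V_2|\ge2$ and again $\omega(f)\ge rs-s+2=s(r-1)+2$. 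Combining the construction with this bound yields the stated equality.
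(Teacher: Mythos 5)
Your proposal follows the paper's approach exactly: the same function ($V_0$ a row $\{(u_1,v_j)\}$, two $2$'s in a second row) for the upper bound, and the identity $\omega(f)=rs-|V_0|+|V_2|$ combined with $|V_0|\le\beta(K_r\times K_s)=s$ for the lower bound. You are, however, rigorous precisely where the paper is not. You actually prove $\beta(K_r\times K_s)=s$, which the paper only asserts. More importantly, the paper's entire lower bound is the single claim ``since $2\le |V'_0|\le \beta(K_r\times K_s)=s$, it follows $|V'_2|\ge 2$,'' which is doubly unjustified: nothing forces an optimal function to have $|V'_0|\ge 2$ a priori, and even granting that, two independent vertices of $V_0$ can share a single neighbour in $V_2$ (for instance $(u_1,v_1)$ and $(u_1,v_2)$ are both adjacent to $(u_2,v_3)$ when $s\ge 3$), so the stated implication is not valid. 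Your three-case analysis --- and in particular the argument that when $|V_0|=s$ a single vertex $w\in V_2$ cannot dominate $V_0$ because $N(w)$ induces $K_{r-1}\times K_{s-1}$, whose independence number is $s-1$ --- is the correct way to establish $|V_2|\ge 2$ where it is needed.

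The caveat you raise about $r=2$ is not a removable technicality, and you could have pushed it further: the statement is actually \emph{false} for $r=2$ and $s\ge 3$, and the paper's ``it can be easily noted that $f$ is an OITRDF'' is exactly where the error hides. For $r=2$ the positive vertices of the proposed function form the edgeless second row, so condition (iii) fails, and no repair is possible because no function of weight $s+2$ exists: $\omega(f)=2s-|V_0|+|V_2|\le s+2$ forces $|V_2|\ge 1$ and hence $|V_0|\ge s-1\ge 2$, so $V_0$ lies in a row (the two-vertex ``columns,'' relevant only when $s=3$, are excluded since their two vertices have no common neighbour, requiring $|V_2|\ge 2$); if $|V_0|=s$ the whole opposite row is positive and edgeless, while if $V_0$ misses exactly $(u_1,v_t)$ then $(u_2,v_t)$ has all of its neighbours in $V_0$ and is isolated among the positive vertices --- either way (iii) fails. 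Concretely, $K_2\times K_3\cong C_6$ and $\gamma_{oitR}(C_6)=6\ne 5$; in general $\gamma_{oitR}(K_2\times K_s)=s+3$ for $s\ge 3$, attained by $V_0=\{(u_1,v_j)\colon 1\le j\le s-2\}$, $V_2=\{(u_2,v_{s-1})\}$ and label $1$ elsewhere. So your argument is a complete and correct proof for $3\le r\le s$, your lower bound remains valid (though not attained) for $r=2<s$, and the theorem's hypothesis should be corrected to $3\le r\le s$; the case $r=s=2$ happens to satisfy the formula, since $K_2\times K_2=2K_2$ has $\gamma_{oitR}=4$, but only via the all-ones function, not via the paper's construction.
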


\begin{proof}
Let $f(V_0,V_1,V_2)$ be a function on $K_r\times K_s$ such that $V_0=\{\textcolor[rgb]{0.00,0.00,0.00}{(u_1,v_i)}\,:\,1\le i\le s\}$, $V_2=\{\textcolor[rgb]{0.00,0.00,0.00}{(u_2,v_1)},(u_2,v_2)\}$ and $V_1=V(K_r\times K_s)\setminus (V_0\cup V_2)$. It can be easily noted that $f$ is an OITRDF on $K_r\times K_s$. Thus $\gamma_{oitR}(K_r\times K_s)\le s(r-1)+2$.

Now, let $f'(V'_0,V'_1,V'_2)$ be a $\gamma_{oitR}(K_r\times K_s)$-function. Since $2\le |V'_0|\le \beta(K_r\times K_s)=s$, it follows $|V'_2|\ge 2$, and we have that
$\gamma_{oitR}(K_r\times K_s)=\omega(f)=2|V'_2|+|V'_1| = rs -|V'_0|+|V'_2|\ge rs-s+2=s(r-1)+2$, which completes the proof.
\end{proof}

\section*{Acknowledgments}

The last two authors (Dorota Kuziak and Ismael G. Yero) have been partially supported by ``Junta de Andaluc\'ia'', FEDER-UPO Research and Development Call, reference number UPO-1263769.

\end{document}